\documentclass[preprint,12pt]{elsarticle}

\usepackage{amssymb}

\usepackage{array}
\usepackage{multirow}

\usepackage{latexsym}
\usepackage{amsmath}
\usepackage{mathrsfs}
\usepackage[mathscr]{euscript}
\usepackage{graphicx}
\usepackage{url}
\usepackage{upgreek}
\usepackage{microtype}
\usepackage{stmaryrd}
\usepackage{mathtools}
\usepackage{xcolor}
\usepackage{caption}
\usepackage[all]{xy}
\usepackage[margin=1in]{geometry}
\usepackage{epsfig}
\usepackage{pgf,tikz}
\usepackage{multicol}

\DeclarePairedDelimiter\ceil{\lceil}{\rceil}
\DeclarePairedDelimiter\floor{\lfloor}{\rfloor}
\DeclareMathOperator{\ALPH}{Alph}

\usepackage{amsthm}

\usepackage{hyperref}
\usepackage{algpseudocode}
\usepackage{algorithm}

\newtheorem{theorem}{Theorem}[section]
\newtheorem{lemma}[theorem]{Lemma}
\newtheorem{proposition}[theorem]{Proposition}

\newtheorem{observation}[theorem]{Observation}
\newtheorem{definition}[theorem]{Definition}
\newtheorem{corollary}[theorem]{Corollary}
\newtheorem{example}[theorem]{Example}

\allowdisplaybreaks

\begin{document}

\begin{frontmatter}

\title{On words that nearly $\theta$-commute}

\author[lab1]{Anuran Maity}
 \ead{anuran.maity@gmail.com} 
 \author[lab2]{Kalpana Mahalingam}
 \ead{kmahalingam@iitm.ac.in}

\address[lab1]{Department of Mathematics,
	 Indian Institute of Technology Guwahati, Guwahati,
	   India}
\address[lab2]{Department of Mathematics, 
	 Indian Institute of Technology Madras, Chennai,
	   India}

\begin{abstract} 
The Hamming distance between two equal length words $\alpha, \beta$ is the number of positions where $\alpha$ and $\beta$ differ.
For  $x, y \in \Sigma^*$ and antimorphic involution $\theta$, $x$ $\theta$-commutes with $y$, if the Hamming distance between $xy$ and $\theta(y)x$ is zero. 
When the Hamming distance between $xy$ and $\theta(y)x$ attains its minimum non-zero value of one, then we say $x$ nearly $\theta$-commutes with $y$. This manuscript investigates properties of $x$ and $y$ such that the  Hamming distance between $xy$ and $\theta(y)x$ is one. We provide a complete characterization of such $x$ and $y$.
We introduce a binary relation $R_\theta$ on $\Sigma^*$, where $x R_\theta y$ holds if and only if $x$ nearly $\theta$-commutes with $y$.
Finally, for a given $y$, we collect all $x$ such that $x$ nearly $\theta$-commutes with $y$, and discuss various properties of this set.                 
\end{abstract}

\begin{keyword}
Hamming distance, $\theta$-commutativity.    
\end{keyword}
\end{frontmatter}

\section{Introduction}
For two equal length words $p$ and $q$, the Hamming distance between them, denoted as $H(p, q)$, measures the number of differing positions between $p$ and $q$ \cite{hamming1950error}.
The concept of Hamming distance has been widely used in various problems of combinatorics on words such as error detection (\cite{galil1990improved}), word transformations (\cite{anselmo2024computing, anselmo2020bad, beal2022checking,ilic2012index, wei2019characterization}), graph constructions (\cite{egecioglu2023fibonacci,KLAVZAR2012220}), etc. An interesting problem within the domain of word transformations is the determination of Hamming distances between a word and its conjugates.
A word $u$ is a \emph{conjugate} of a word $w$ if $w = xy$ and $u = yx$ for some words $x, y$. Shallit \cite{shallithamming} investigated all possible Hamming distances between a word and its conjugates and showed that the Hamming
distance between a word and its conjugate can never be one.
When the Hamming distance between $xy$ and $yx$ is zero, i.e.,  $H(xy, yx) = 0$, then the words $x$ and $y$ are said to \emph{commute} with each other. Generalizing this concept, Gabric \cite{gabric2022words} introduced the notion of \emph{almost commute} words. Two words $x$ and $y$ are said to \emph{almost commute} if the Hamming distance between $xy$ and $yx$ attains its minimum non-zero possible value of two.
From a theoretical standpoint, Gabric \cite{gabric2022words} characterized and enumerated all such pairs of words $x$ and $y$ that almost commute.

In DNA based computation, a word $y$ and its Watson-Crick complement $\theta_{WC}(y)$ encodes the same information and help to form intra/inter-molecular hybridizations where $\theta_{WC}$ is an antimorphic involution over $\{A, T, G, C\}^*$ with $\theta_{WC}(A)=T$ and $\theta_{WC}(C)=G$.
A function  $\theta:\Sigma^{*} \rightarrow \Sigma^{*}$ is said to be an \textit{antimorphic involution} if $\theta(uv)=\theta(v) \theta(u)$ for any $u, v \in \Sigma^*$, and $\theta^{2}$ is {the} identity mapping on $\Sigma^*$.
Inspired by these relations between $y$ and $\theta_{WC}(y)$, a lot of work has been done to extend the results of classical combinatorics on words in the antimorphic involution setting, for example, see \cite{CZ12},  \cite{kari2014pseudo},  \cite{kari2017disjunctivity}, \cite{kari2007involutively}, \cite{kari2008watson}, \cite{watson}, \cite{LKari2010}, \cite{kari2019state}, \cite{kari2009pseudoknot}, \cite{starosta2011theta}, etc.
Kari and Mahalingam \cite{watson} extended the idea of conjugates by defining the concept of \emph{$\theta$-conjugates} for an antimorphic involution $\theta$. 
A word $u$ is said to be a $\theta$-conjugate of a word $w$ if $w = xy$ and $u = \theta(y)x$ for some words $x, y$. 
Authors in \cite{watson}, \cite{mahalingam2025watson} and \cite{mahalingam2022counting} explored several properties of $w$ and $u$ such that $u$ is a $\theta$-conjugate of $w$. 
A word $x$ is said to \textit{$\theta$-commute} with a word $y$ if $xy = \theta(y)x$, i.e., $H(xy, \theta(y)x) = 0$. Authors in \cite{watson} characterized such words $x$ and $y$.
In this manuscript, we first observe that if $x$ does not $\theta$-commute with $y$, then the minimum non-zero Hamming distance between  $xy$ and $\theta(y)x$ is one. We then extend the works of Shallit \cite{shallithamming} and Gabric \cite{gabric2022words} to the setting of $\theta$-conjugates and study properties of words $x$ and $y$ that satisfy $H(xy, \theta(y)x) = 1$.
We first characterize such $x$ and $y$. Then, we introduce and study a binary relation $R_\theta$ on $\Sigma^*$, where $x R_\theta y$ holds if and only if $x$ nearly $\theta$-commutes with $y$. Finally, for a given $y$, we collect all $x$ such that $x$ nearly $\theta$-commutes with $y$, and discuss various properties of this set.
Our study is interesting from two perspectives. First, it is relevant from a theoretical standpoint.
Second, in the context of DNA-based computation, the strands $xy$ and $\theta_{WC}(y)x$ cannot be used simultaneously to avoid intermolecular hybridization (as explained in \cite{mahalingam2025watson}). Given that $H(xy, \theta_{WC}(y)x)=1$, i.e., $xy$ and $\theta_{WC}(y)x$ are almost same, determining whether either strand can be used for the computation requires a detailed understanding of the structural properties of these strands.

The manuscript is organized as follows:
Section \ref{Preliminary} presents some basic definitions and results that will be used throughout the text.
In Section \ref{maindefinition}, we define the notion of a word $x$ that nearly $\theta$-commutes with $y$ and discuss properties of such $x$ and $y$.
In Section \ref{relation}, we define and discuss a binary relation $R_\theta$ such that $xR_\theta y$ holds if $x$ nearly $\theta$-commutes with $y$. 
In Section \ref{Property}, we discuss various combinatorial properties of the set of all words $x$ that nearly $\theta$-commutes with a given $y$.
We end the manuscript with a few concluding remarks.

\section{Preliminaries}\label{Preliminary}

An \textit{alphabet} $\Sigma$ is a finite non-empty set of symbols. 
A \textit{word} over $\Sigma$ is a finite sequence of symbols from $\Sigma$. $\Sigma^{*}$ denotes the set of all words over $\Sigma$ including the \textit{empty word} $\lambda$ and $\Sigma^{+}=\Sigma^* \setminus \{\lambda\}$. 
The \textit{length of a word}  $w$ is denoted by $|w|$.
The \textit{reversal} of $w=a_{1}a_{2} \cdots a_{n}$ is defined to be the word $w^{R}=a_{n} \cdots a_{2} a_{1}$ where each $a_{i} \in \Sigma$. 
A word $w$ is said to be a \textit{palindrome} if $w=w^{R}$. 
A word $w\in \Sigma^+$ is called \textit{primitive} if it cannot be written as a power of another word; that is, $w=u^n$ implies $n=1$ and $w=u$.
 %
A word $u \in \Sigma^*$ is a \textit{factor} of $w$ if $w=xuy$ where $x, y \in \Sigma^*$. 
If $x= \lambda$ (resp. $y=\lambda$), then $u$ is a \textit{prefix} (resp. \textit{suffix}) of $w$. 
Also, if $x= \lambda$ and $u \neq w$ (resp. $y = \lambda$ and $u \neq w$), then $u$ is a \textit{proper prefix} (resp. \textit{proper suffix}) of $w$. The set of all prefixes (resp. suffixes and factors) of $w$ is denoted by $\text{Pref}(w)$ (resp. $\text{Suff}(w)$ and $\text{Fac}(w)$).
An integer $m \geq 1$ is a \textit{period} of a word $w = a_1 a_2 \cdots a_n$ where each $a_i \in \Sigma$, if $a_j = a_{j+m}$ for all $1 \leq j \leq n - m$.
For words $u$ and $w$, $|w|_u$ denotes the \textit{number of occurrences} of $u$ as a factor of $w$. We denote the set of all factors of $w$ of length $1$ by $\ALPH(w)$.
The \textit{Hamming distance}, denoted as $H(x, y)$, measures the number of differing positions between two words of equal length, $x$ and $y$. For instance, $H(\text{town}, \text{tree}) = 3$.
A word $u \in \Sigma^*$ is a \textit{conjugate} of $w \in \Sigma^*$ if
$w=\alpha \beta$ and $u=\beta \alpha$ for some $\alpha, \beta \in \Sigma^*$.
The set of all conjugates of $w$, denoted as $C(w)$, is the set $\{\beta \alpha: w=\alpha \beta \text{ where } \alpha, \beta \in \Sigma^*\}$.
For $x, y \in \Sigma^*$, if $x$ and $y$ \textit{commute}, then $xy=yx$, i.e., $H(xy, yx)=0$.
But interestingly, Shallit \cite{shallithamming} proved that the next
smallest value $H(xy, yx)$ can take on is $2$ and not $1$.
He showed the following.

\begin{theorem}\cite{shallithamming}\label{conalmc5++}
 For $x, y \in \Sigma^*$, $H(xy, yx) \neq 1$.
\end{theorem}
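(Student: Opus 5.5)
The approach is a counting argument on letter occurrences, since $xy$ and $yx$ are conjugates and hence have the same Parikh vector. I would argue by contradiction.

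Suppose $H(xy, yx) = 1$. Write $n = |x| + |y|$, $w = xy$ and $u = yx$. Both words have length $n$.

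The first step is to note that $u$ is a conjugate of $w$. Consequently, for every letter $a \in \Sigma$,
\[
|w|_a = |x|_a + |y|_a = |u|_a .
\]

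The second step uses the assumption that $w$ and $u$ differ in exactly one position $i$. Say $w$ has the letter $a$ there and $u$ has the letter $b$, with $a \neq b$. At every other position the letters of $w$ and $u$ coincide. Hence $|w|_a = |u|_a + 1$.

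This contradicts the equality of letter counts from the first step. Therefore $H(xy,yx) \neq 1$.

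The degenerate cases, where $x$ or $y$ is empty, give $xy = yx$ and hence distance $0$. These are covered automatically by the argument.

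No step is a real obstacle. The only point needing care is the observation that a single mismatch changes the multiset of letters. This works for any alphabet size, because the mismatch involves two distinct letters $a$ and $b$, and the count of $a$ alone already detects the difference.
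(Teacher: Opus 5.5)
Your argument is correct and complete: since $xy$ and $yx$ contain each letter the same number of times, a single mismatch with $a\neq b$ at one position would force $|xy|_a = |yx|_a + 1$, which is impossible. The paper gives no proof of its own here and simply cites Shallit; your letter-counting argument is essentially the standard proof of this fact, and it shows more generally that any two words with the same letter counts cannot be at Hamming distance one.
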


In a talk \cite{shallitsir}, Shallit introduced a similar concept of \emph{almost commute} words. Later, Gabric \cite{gabric2022words} defined it formally.
\begin{definition}\cite{gabric2022words}
   Two words $x$ and $y$ are said to \emph{almost commute} if $H(xy, yx) = 2$. 
\end{definition}

A function  $\theta$ is an \textit{antimorphism}  on $\Sigma^{*}$ if $\theta(uv)=\theta(v) \theta(u)$ holds for all $u, v \in \Sigma^*$, and is an \textit{involution} on $\Sigma$ if $\theta(\theta(a))=a$ holds for all $a \in \Sigma$. 
Throughout this text, we take $\theta$ as an antimorphic involution.

A word $w$ is called a \textit{$\theta$-palindrome} if $w = \theta(w)$. 
The set of all $\theta$-palindromes over $\Sigma^*$ is denoted by $\textup{P}_\theta$.
A word $u$ is a \textit{$\theta$-conjugate} of a word $w$ if
$w=xy$ and $u=\theta(y)x$ for some $x, y \in \Sigma^*$. The set of all $\theta$-conjugates of $w$ is denoted by $C_{\theta}(w)$.
For $x, y \in \Sigma^*$, $x$ is said to \textit{$\theta$-commute} with $y$ if $xy = \theta(y)x$, i.e., $H(xy, \theta(y)x) = 0$.

A grammar $G$ is said to be \textit{right-linear} (resp. \textit{left-linear}) if all production rules are of the form $A \rightarrow xB$ or $A \rightarrow x$ (resp. $A \rightarrow Bx$ or $A \rightarrow x$),  where $A, B$ are non-terminal symbols, and $x$ is a terminal symbol ($x$ can be empty). 
A \textit{regular grammar} is one that is either right-linear or left-linear. A Language $L$ is said to be \textit{regular} if there exists a regular grammar which generates $L$.

We also recall some basic results from the literature, which will be used multiple times in the rest of this manuscript.

\begin{lemma} \label{44lkc6+}\cite{Schutz62}
Let $u,v,w \in \Sigma^{+}$.
\begin{itemize}
     \item If $uv=vu$, then $u$ and $v$ are powers of a common primitive word. 
     \item If $uv=vw$, then for $k \geq 0$, $i\geq 1$, $x \in \Sigma^{+}$ and $y \in \Sigma^{*}$, $u=(xy)^i$, $v=(xy)^{k}x$, $w=(yx)^i$.
 \end{itemize}
\end{lemma}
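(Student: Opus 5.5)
This is the classical Lyndon--Sch\"utzenberger lemma, so I would give the standard length induction for each part. The second part follows from a refined form of the first, so I would treat the equation $uv=vw$ as the main object and derive the commuting case at the end.

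\emph{Second part.} Suppose $uv=vw$ with $u,v,w\in\Sigma^+$; comparing lengths gives $|u|=|w|$. I would induct on $|v|$.
\begin{itemize}
\item If $|v|\le |u|$, then $v$ is a prefix of $uv$ of length at most $|u|$, so $u=vy$ for some $y\in\Sigma^*$. Substituting, $vyv=vw$, hence $w=yv$. Take $x=v\in\Sigma^+$, $k=0$ and $i=1$; then $u=xy$, $v=x$ and $w=yx$, as required.
\item If $|v|>|u|$, then $u$ is a proper prefix of $v$, so $v=uv'$ with $v'\in\Sigma^+$. Substituting, $u\,uv'=uv'w$, so $uv'=v'w$ with $|v'|<|v|$. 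By induction, $u=(xy)^i$, $v'=(xy)^kx$ and $w=(yx)^i$. Then $v=uv'=(xy)^{i+k}x$, which has the required form with $k$ replaced by $i+k$.
\end{itemize}

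\emph{First part.} If $uv=vu$, the second part with $w=u$ gives $u=(xy)^i$, $v=(xy)^kx$ and $(yx)^i=(xy)^i$. Since $i\ge1$, comparing prefixes of length $|xy|$ yields $xy=yx$. This reduces the problem to commuting words of smaller total length, because $|x|+|y|\le|u|<|u|+|v|$. So I would run an induction on $|u|+|v|$, disposing of the degenerate case $y=\lambda$ directly. By induction, $x$ and $y$ are powers of a common primitive word $z$, and then so are $u$ and $v$.

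An alternative is to argue via Fine and Wilf or via primitive roots: let $z$ be the primitive root of $u$. From $uv=vu$ one gets $u^nv=vu^n$ for all $n$, so $v$ is a prefix of $u^\omega$ and $z^{\omega}$ is periodic with period $|v|$. Then primitivity of $z$ forces $|z|$ to divide $|v|$. I prefer the induction above since it needs no extra lemma.

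The only delicate point is in the second part: the decomposition must keep $x\in\Sigma^+$. This holds because $v$ is nonempty at every stage of the induction, and $x$ is either $v$ itself or inherited from the induction hypothesis. The rest is routine bookkeeping.
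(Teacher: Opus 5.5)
Your proof is correct. It is a genuinely different route only in the trivial sense that the paper gives no proof at all: the lemma is recalled from the literature (Lyndon--Sch\"utzenberger) and simply cited, so there is no in-paper argument to compare against.

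\emph{The equation $uv=vw$.} Your induction on $|v|$ is sound, and it is the standard textbook argument.
\begin{itemize}
\item When $|v|\le|u|$, you get $u=vy$ and $w=yv$ directly, with $x=v\in\Sigma^+$.
\item When $|v|>|u|$, you reduce to $uv'=v'w$. Here $0<|v'|<|v|$, and the strict decrease correctly uses $u\neq\lambda$.
\end{itemize}

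\emph{The equation $uv=vu$.} Specializing to $w=u$ gives $(xy)^i=(yx)^i$, hence $xy=yx$. Since $|x|+|y|\le|u|<|u|+|v|$, a strong induction on $|u|+|v|$ is well founded. It needs no separate base case, because the minimal instance $|u|=|v|=1$ falls into the case $y=\lambda$.

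Two small points are worth making explicit.
\begin{itemize}
\item Your construction always produces $i=1$. This is fine, since the statement is existential in $k,i,x,y$.
\item In the degenerate case $y=\lambda$, say that $x$ is a power of a primitive word. Take the shortest $z$ with $x=z^m$; then $z$ is primitive, so $u=x^i$ and $v=x^{k+1}$ are powers of $z$.
\end{itemize}

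Your alternative sketch via primitive roots is incomplete as written. The step ``primitivity of $z$ forces $|z|$ to divide $|v|$'' needs a periodicity argument: essentially Fine--Wilf applied to $z^\omega$, which has periods $|z|$ and $|v|$, together with the fact that a primitive $z$ gives $z^\omega$ least period $|z|$. So your preference for the self-contained inductive proof is justified.
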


\begin{proposition}\cite{watson}\label{ch4defctw32+2}
If  $xy = \theta(y)x$ for some $x, y \in \Sigma^+$, 
then $x = p(qp)^i$, $y = qp$ where $i \geq 0$, $p \in \textup{P}_\theta \cap \Sigma^+$ and $q \in \textup{P}_\theta \cap \Sigma^*$.
\end{proposition}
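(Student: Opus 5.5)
The statement is Proposition~\ref{ch4defctw32+2}: if $xy=\theta(y)x$ with $x,y\in\Sigma^+$, then $x=p(qp)^i$ and $y=qp$ with $p,q$ $\theta$-palindromes and $p\neq\lambda$. I will reduce the equation to a classical conjugacy equation and then read off the $\theta$-palindromic structure by applying $\theta$.

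\textbf{Step 1: conjugacy.} Write $u=\theta(y)$. The hypothesis becomes $u\,x = x\,y$ with $u,y\in\Sigma^+$. By the second part of Lemma~\ref{44lkc6+} (with $v=x$, read as $u v = v w$), there exist $s\in\Sigma^+$, $t\in\Sigma^*$, $k\ge 0$ and $i\ge 1$ such that
\[
\theta(y)=(st)^i,\qquad x=(st)^k s,\qquad y=(ts)^i .
\]

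\textbf{Step 2: compare the two expressions for $\theta(y)$.} From $y=(ts)^i$ we get $\theta(y)=(\theta(s)\theta(t))^i$. Hence
\[
(st)^i=(\theta(s)\theta(t))^i .
\]
Both bases have the same length, and equal $i$-th powers of words of equal length force equal bases. So $st=\theta(s)\theta(t)$. Since $|s|=|\theta(s)|$, comparing prefixes gives $s=\theta(s)$ and $t=\theta(t)$.

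\textbf{Step 3: absorb the exponent.} Put $p=s$, which is a nonempty $\theta$-palindrome. Put $q=t(st)^{i-1}$. This $q$ has the form $t\,s\,t\cdots s\,t$, a palindromic arrangement of $\theta$-palindromes, so $\theta(q)=q$ follows from the antimorphism. Then:
\begin{itemize}
\item $qp=t(st)^{i-1}s=(ts)^i=y$;
\item $x=(st)^k s=s(ts)^k$.
\end{itemize}
However, $x$ is naturally a power of $ts$ rather than of $qp=(ts)^i$, so this $q$ does not yet give the required form for $x$.

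\textbf{Step 4: handle a mismatched exponent (main obstacle).} If $k=ci+r$ with $0\le r<i$, I would not keep $p=s$. Instead I take
\[
p'=s(ts)^r,\qquad q'=t(st)^{i-r-1}.
\]
Both are $\theta$-palindromes by the same symmetric-word argument, and $p'\neq\lambda$. Checking the form:
\begin{itemize}
\item $q'p'=t(st)^{i-r-1}s(ts)^r=(ts)^{i}=y$;
\item $p'(q'p')^c=s(ts)^r(ts)^{ic}=s(ts)^{k}=x$.
\end{itemize}
This gives exactly the claimed form, with exponent $c\ge 0$.

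The delicate part is this bookkeeping of exponents, in particular verifying that $p'$ and $q'$ are $\theta$-palindromes. This uses $\theta(ab)=\theta(b)\theta(a)$ together with $s,t\in\textup{P}_\theta$, so that any word reading the same forward and backward in the blocks $s,t$ is a $\theta$-palindrome. Step~2 is routine.
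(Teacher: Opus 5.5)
Your proof is correct and follows the standard route behind this result; the paper gives no proof of its own and only cites \cite{watson}. You rewrite the hypothesis as the conjugacy equation $\theta(y)x = xy$, solve it with Lemma~\ref{44lkc6+}, and compare $\theta(y)=(st)^i$ with $\theta((ts)^i)=(\theta(s)\theta(t))^i$ to force $s,t\in\textup{P}_\theta$. Your Step~4 regrouping $p'=s(ts)^r$, $q'=t(st)^{i-r-1}$ is verified correctly, and it is needed only because the paper's version of Lemma~\ref{44lkc6+} allows an exponent $i\ge 1$; with the usual form of that lemma ($i=1$) you could take $p=s$, $q=t$ directly.
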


 The following result, known as Fine and Wilf theorem, illustrates a fundamental periodicity property of words. As usual, $gcd(n,m)$ denotes the greatest common divisor of $n$ and $m$.
\begin{theorem}\label{FineandWilf}\cite{Lothaire97}
    Let $u, v \in \Sigma^*$, $n=|u|$, $m=|v|$, and $d=gcd(n,m)$. If two powers $u^i$ and $v^j$ of $u$ and $v$ have a common prefix of length at least $n+m-d$, then $u$ and $v$ are powers of a common word. Moreover, the bound $n+m-d$ is optimal.
\end{theorem}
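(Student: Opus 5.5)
The plan is to reduce the statement to a periodicity lemma and prove that by a Euclid-type induction. Let $w$ be the common prefix of $u^i$ and $v^j$, with $|w| \geq n+m-d$. Then $w$ has both periods $n$ and $m$. The core claim is:

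\emph{Periodicity lemma.} If a word $w$ has periods $p$ and $q$ and $|w| \geq p+q-\gcd(p,q)$, then $w$ has period $\gcd(p,q)$.

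Given the lemma, $w$ has period $d$. Since $d \le m$, we get $|w| \geq n$, so $u$ is a prefix of $w$; it therefore has period $d$, and because $d \mid n$ this gives $u = z^{n/d}$, where $z$ is the prefix of $w$ of length $d$. Symmetrically $v = z^{m/d}$, so $u$ and $v$ are powers of the common word $z$.

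To prove the lemma, I will induct on $p+q$. If $p=q$ there is nothing to show. Otherwise assume $p>q$ and let $w'$ be the prefix of $w$ of length $|w|-q$. The key step is that $w'$ has period $p-q$. Take positions with $k+(p-q) \le |w'|$. Then $k+p \le |w|$, so $w[k]=w[k+p]$ by period $p$, and $w[k+p]=w[k+p-q]$ by period $q$. Also $w'$ inherits period $q$. Its length satisfies $|w'| \ge p-d = (p-q)+q-\gcd(p-q,q)$, using $\gcd(p-q,q)=d$. So the induction hypothesis applies and $w'$ has period $d$.

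The main obstacle is lifting period $d$ from $w'$ back to $w$, and this is where care is needed. Since $p-q \ge d$, we have $|w'| \ge q$, so the length-$q$ prefix of $w$ has period $d$. Because $d \mid q$, that prefix is a power of a length-$d$ word. As $w$ has period $q$, $w$ is a prefix of a power of that prefix, hence a prefix of a power of the length-$d$ word, and so $w$ has period $d$. I will also check the degenerate cases where $w'$ is empty or $q \mid p$, which the inequalities above already cover.

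For optimality I will exhibit, for given $n,m$, words $u,v$ whose powers share a common prefix of length $n+m-d-1$ but which are not powers of a common word. It suffices to build a word of length $n+m-d-1$ with periods $n$ and $m$ but not period $d$, by the standard Sturmian / central-word construction found in Lothaire. A small witness is $n=2$, $m=3$: with $u=ab$ and $v=aba$, the powers $u^2=abab$ and $v^2=abaaba$ share the prefix $aba$ of length $3=n+m-d-1$, yet $u,v$ are not powers of a common word.
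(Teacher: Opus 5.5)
The paper gives no proof of this theorem; it is quoted from Lothaire, so there is nothing internal to compare against. Your proof of the main implication is correct and is the standard one:
\begin{itemize}
\item The common prefix $w$ has periods $n$ and $m$.
\item In the Euclid-type step, $w'$ has periods $p-q$ and $q$, with $|w'|\ge (p-q)+q-\gcd(p-q,q)$.
\item The lifting step works because $|w'|\ge p-d\ge q$. So the length-$q$ prefix of $w$ is a power of a length-$d$ word, and $w$, having period $q$, is a prefix of its powers.
\end{itemize}
You should set aside $n=0$ or $m=0$ first, where the conclusion is trivial and ``period $0$'' is meaningless.

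The gap is in the optimality clause. Your reduction to ``a word of length $n+m-d-1$ with periods $n$ and $m$ but not period $d$'' cannot be carried out when $n\mid m$ or $m\mid n$. In that case $d\in\{n,m\}$ is itself one of the two periods, so no such word exists, and no central-word construction will supply one. Optimality still holds there, but it needs a different witness. If $m\mid n$, take $u=a^{n-1}b$ and $v=a^m$ (symmetrically if $n\mid m$). Their powers share the prefix $a^{n-1}$, of length $n-1=n+m-d-1$. They are not powers of a common word: since $v\neq\lambda$, such a word would be a power of $a$, while $u$ contains $b$.

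When $d<\min(n,m)$ your reduction is sound. Then $n+m-d-1\ge\max(n,m)$, so $u$ and $v$ can be taken as prefixes of $w$. If $u=z^r$ and $v=z^s$, then $|z|\mid d$, and $w$, being a prefix of $z^\omega$, would have period $d$. You also need not cite Lothaire for the construction of $w$:
\begin{itemize}
\item Join each position $i\in\{1,\dots,n+m-d-1\}$ to $i+n$ and to $i+m$ when these exist.
\item There are at most $(m-d-1)+(n-d-1)$ edges, hence at least $d+1$ connected components.
\item Each component lies in one residue class mod $d$, so some class contains two components.
\item Writing $b$ on one of those components and $a$ everywhere else yields periods $n$ and $m$ without period $d$.
\end{itemize}

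Finally, optimality requires $|\Sigma|\ge 2$. Your single example with $n=2$, $m=3$ shows only that the bound cannot be lowered uniformly, not that it is sharp for every pair $(n,m)$.
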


 For all other concepts, the reader is referred to  Lothaire \cite{Lothaire97}.

\section{Words that nearly $\theta$-commute}\label{maindefinition}
In this section we define the notion \textit{words that nearly $\theta$-commute.} It was shown in \cite{shallithamming} that the Hamming distance between $xy$ and $yx$ can never be 1 for all $x,y\in \Sigma^*$, but this is not the case for words $xy$ and $\theta(y)x$. The minimum non-zero Hamming distance between words $xy$ and $\theta(y)x$ is 1. We say $x$ nearly $\theta$-commutes with $y$ if $H(xy,\theta(y)x)=1$.  In this section, we characterize words $x$ and $y$ such that $x$ nearly $\theta$-commutes with $y$. 

We begin the section by illustrating with an example that the Hamming distance between $xy$ and $\theta(y)x$ for $x, y \in \Sigma^*$ can vary from $0$ to $|xy|$, i.e., $0 \leq H(xy, \theta(y)x) \leq |xy|$. 

\begin{example}\label{exm1}
Consider $\Sigma=\{a,b\}$ and $\theta$ be such that $\theta(a)=b$. 
\begin{enumerate}
     \item For $x=ba$ and $y= abba$, $H(xy, \theta(y)x)=H(baabba, baabba) =0< 6=|xy|$.
    \item For $x=ab$ and $y= b$, $H(xy,\theta(y)x)=H(abb, aab) = 1<3=|xy|$.
    \item For $x=ba$ and $y= ab$, $H(xy,\theta(y)x)=H(baab, abba)=4=|xy|$.
\end{enumerate}
\end{example}

From Example \ref{exm1}, it is clear that for $x, y \in \Sigma^*$, the next smallest possible Hamming distance between $xy$ and $\theta(y)x$ after $0$ is $1$.
If $ H(xy, \theta(y)x) = 1$, then the word $x$ \textit{``nearly" $\theta$-commutes} with $y$ in the sense that $xy$ and $\theta(y)x$ differ in exactly one position.
We now extend the idea of almost commute words and define the concept of words that nearly $\theta$-commute.

\begin{definition}\label{def1main}
For $x, y \in \Sigma^*$, $x$  \textit{nearly $\theta$-commutes} with $y$
if $ H(xy, \theta(y)x) = 1$.
\end{definition}

From Theorem \ref{conalmc5++}, we know that $H(xy, yx) \neq 1$ for any $x, y \in \Sigma^*$. 
This implies that if $y \in \textup{P}_\theta$, then $H(xy, \theta(y)x) \neq 1$. 
Thus, we have the following observation from Theorem \ref{conalmc5++} and Definition \ref{def1main}.
\begin{observation}\label{obseryPtheta}
  If $x$ nearly $\theta$-commutes with $y$, then $y \notin \textup{P}_\theta$. In other words,  if $y \in \textup{P}_\theta$, then there does not exist any $x \in \Sigma^*$ such that $x$ nearly $\theta$-commutes with $y$.
\end{observation}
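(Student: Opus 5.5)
The plan is to argue by contraposition: assume $y \in \textup{P}_\theta$ and show that $H(xy, \theta(y)x) \neq 1$ for every $x \in \Sigma^*$. Both parts of Observation \ref{obseryPtheta} say the same thing, so this one argument handles both.

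Since $y$ is a $\theta$-palindrome, we have $\theta(y) = y$, and so the word $\theta(y)x$ is literally $yx$. Hence
\[
H(xy, \theta(y)x) = H(xy, yx).
\]
Theorem \ref{conalmc5++} (Shallit) says that $H(xy, yx) \neq 1$ for all $x, y \in \Sigma^*$. Therefore $H(xy, \theta(y)x) \neq 1$. By Definition \ref{def1main}, this means $x$ does not nearly $\theta$-commute with $y$, for any $x$.

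For the first formulation: if $x$ nearly $\theta$-commutes with $y$, then $y$ cannot be in $\textup{P}_\theta$, because that would contradict what we just derived. The degenerate cases need no separate treatment. When $x = \lambda$ or $y = \lambda$, the two words coincide and the distance is $0$. In any case Theorem \ref{conalmc5++} covers all of $\Sigma^*$, empty words included.

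There is no real obstacle here. The only point requiring care is the substitution $\theta(y) = y$, which follows immediately from the definition of $\textup{P}_\theta$; all the combinatorial content sits in Theorem \ref{conalmc5++}, which we take as given.
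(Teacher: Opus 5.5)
Your proof is correct and is essentially the paper's own argument: the paper likewise uses $\theta(y)=y$ for $y\in\textup{P}_\theta$ to rewrite $H(xy,\theta(y)x)$ as $H(xy,yx)$ and then invokes Shallit's Theorem \ref{conalmc5++}.
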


 We now show that, by inspecting the maximal length of a shared prefix of $xy$ and $\theta(y)x$, in some cases,  it is possible to establish that $x$ nearly $\theta$-commutes with $y$, without looking into the structures of $x$ and $y$.

\begin{proposition}\label{c6fwko+}
Let $x, y \in \Sigma^+$ such that $|x|\leq |y|$. If $xy=uav$ and $\theta(y)x = ubq$ where $ |u|=|y|+\floor{\frac{|x|}{2}}$ and $a, b \in \Sigma$ with $a\neq b$, $b\neq \theta(b)$ then $H(xy,\theta(y)x) = 1$.
\end{proposition}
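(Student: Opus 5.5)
The plan is to compare $xy$ and $\theta(y)x$ position by position. I will show that the agreement on the prefix $u$ forces agreement on every position after the mismatch. Then $v=q$, and since $a\neq b$ we get $H(xy,\theta(y)x)=1$ exactly.

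Set $n=|x|$, $m=|y|$, $w=xy$ and $w'=\theta(y)x$, and write $w[i]$ for the $i$-th letter. The letters of both words can be expressed through $x$ and $y$:
\begin{itemize}
\item for $i>m$ we have $w'[i]=x[i-m]$;
\item for $k\le m$ we have $w'[k]=\theta(y[m+1-k])$, equivalently $y[j]=\theta(w'[m+1-j])$;
\item for $i>n$ we have $w[i]=y[i-n]$.
\end{itemize}
Since $n\le m\le |u|$, the whole block $x=w[1..n]$ lies inside the common prefix. Hence $w[i]=w'[i]$ for all $i\le n$, and in particular for all $i\le |u|$.

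The first step extracts a partial $\theta$-palindromicity of $x$ from the agreement at positions $m+1,\dots,m+\floor{\frac{n}{2}}$. For $1\le k\le \floor{\frac n2}$ we have $w'[m+k]=x[k]$. On the other side,
\[
w[m+k]=y[m+k-n]=\theta(w'[n+1-k])=\theta(w[n+1-k])=\theta(x[n+1-k]),
\]
where the middle equality uses $n+1-k\le n\le |u|$. Agreement therefore gives $x[k]=\theta(x[n+1-k])$ for all $k\le\floor{\frac n2}$. Applying $\theta$ gives the same relation for the mirrored indices.

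The second step treats the positions after the mismatch, namely $i=m+i'$ with $\floor{\frac n2}+2\le i'\le n$; these all lie in the $x$-part of $w'$. Here $w'[i]=x[i']$. By the same rewriting, $w[i]=y[i-n]=\theta(w'[n+1-i'])=\theta(x[n+1-i'])$. The index $j=n+1-i'$ satisfies $j\le \lceil n/2\rceil-1\le\floor{\frac n2}$, so the first step gives $x[i']=\theta(x[j])$, and thus $w[i]=w'[i]$. So all positions after $|u|+1$ agree, which means $v=q$.

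The main delicate point is the index bookkeeping rather than any deep argument. One must verify that every index passed back into $w$ or $w'$ lies within the common prefix, which is where the hypothesis $|x|\le|y|$ is used. One must also check that the ranges of $i'$ and $j$ match up for both parities of $n$.

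The hypothesis $b\neq\theta(b)$ I expect to enter only as a consistency check at the mismatch position $k=\floor{\frac n2}+1$, where $b=x[k]$. When $n$ is odd this $k$ is the middle index, and the same computation shows $a=\theta(b)$. A genuine mismatch there then requires $b\neq\theta(b)$, so this hypothesis is compatible with $a\neq b$ and is not needed further in the counting.
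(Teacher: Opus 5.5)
Your argument is correct and is essentially the paper's proof carried out letter by letter. The paper's identity $\theta(x)=x_1av$ (from $x=\theta(y_2)$ and $y_2=x_1av$) is exactly your relation $x[k]=\theta(x[n+1-k])$ for $k\le\floor{\frac{n}{2}}$, and the paper's conclusion $v=\theta(x_1)=q$ is your tail comparison. The only difference is that you treat both parities of $|x|$ at once, whereas the paper splits on parity and shows the even case is impossible because it forces $a=b$; this also confirms your remark that the hypothesis $b\neq\theta(b)$ is redundant.
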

\begin{proof}
Let $xy=uav$ and $\theta(y)x = ubq$ with $a\neq b$, $b\neq \theta(b)$ and $a,b\in \Sigma$. Thus,
$H(xy,\theta(y)x) \ge 1$.
We need to show that $x$ nearly $\theta$-commutes with $y$ i.e.,
$$H(xy,\theta(y)x) =H(uav, ubq) = 1, \text{ or equivalently, } H(v,q)=0.$$
Since, $\theta(y)x = ubq$ and  $|u|=|y|+\floor{\frac{|x|}{2}}$, for $x=x_1x_2$, $x_1, x_2\in \Sigma^*$ and $|x_1| = \floor{\frac{|x|}{2}}$ we have $u= \theta(y)x_1$ and $x_2 = bq$ which implies that 
\begin{equation}\label{primary_equ}
    xy = uav = \theta(y)x_1av.
\end{equation}
As $|x|\le |y|$, for  $y=y_1 y_2$ with $y_1, y_2 \in \Sigma^*$ and $|y_2|=|x|$ we have from Equation (\ref{primary_equ}),
$\theta(y_2) = x$, $y_1 =\theta(y_1)$ and $y_2 =\theta(x) = x_1av$. 
Then as $x_2=bq$, we have
\begin{equation}\label{secondary_equ}
    \theta(x) =\theta(x_2)\theta(x_1) = \theta(q)\theta(b)\theta(x_1) = x_1av. 
\end{equation}
From Equation (\ref{secondary_equ}), we now have two cases (see Figure \ref{Fig_rem_y>x}):
\begin{enumerate}
    \item If $|x|$ is even, then $|x_1| = |x_2|= |av|=|bq|$ and $x_1 = \theta(q)\theta(b) = \theta(v)\theta(a)$ which implies that $a =b$, which is a contradiction to our assumption that $a\neq b$.
    \item If $|x|$ is odd, then $|x_1| =|v| =|q|$ and $x_1 = \theta(q) = \theta(v)$ and $a =\theta(b)$ which implies that 
    $$H(v,q) = 0, \text{ i.e., } H(xy,\theta(y)x) = 1.$$
\end{enumerate}

\begin{figure}[h]
    \centering
        \begin{tikzpicture}
        
        \draw[thick,-] (-2,-2.5) -- (7.5,-2.5) -- (7.5,2) -- (-2,2) -- (-2,-2.5);
        
        \draw[] (-1,-0.2) -- (-1,-0.2) node[midway, above] {$xy : $};
        \draw[] (-1,-1.4) -- (-1,-1.4) node[midway, above] {$\theta(y)x : $};
        
        \draw[-] (0,0) -- (7,0); 
        \draw[-] (3,0.1) -- (3,-0.1);
        \draw[<->] (0,0.2) -- (3,0.2) node[midway, above] {$x$};
        \draw[<->] (3.1,0.2) -- (3.9,0.2) node[midway, above] {$y_1$};
        \draw[-] (4,0.1) -- (4,-0.1);
        \draw[<->] (4,1) -- (7,1) node[midway, above] {$\theta(x)$};

        \draw[<->] (4,0.15) -- (5.4,0.15) node[midway, above] {$x_1$};
        \draw[] (5.5,0.2) -- (5.5,0.2) node[midway, above] {$a$};
        \draw[<->] (5.6,0.15) -- (7,0.15) node[midway, above] {$v$};
        \draw[-] (5.4,0.1) -- (5.4,-0.1);
        \draw[-] (5.5,0.1) -- (5.5,-0.1);
        \draw[-] (5.6,0.1) -- (5.6,-0.1);

        \draw[-] (0,-1) -- (7,-1);
        \draw[-] (3,-0.9) -- (3,-1.1);
        \draw[<->] (0,-1.2) -- (3,-1.2) node[midway, below] {$x$};
        \draw[<->] (3.1,-1.2) -- (3.9,-1.2) node[midway, below] {$y_1$};
        \draw[-] (4,-0.9) -- (4,-1.1);
        \draw[<->] (4,-1.8) -- (7,-1.8) node[midway, below] {$x$};

        \draw[-] (5.4,-0.9) -- (5.4,-1.1);
        \draw[-] (5.5,-0.9) -- (5.5,-1.1);
        \draw[-] (5.6,-0.9) -- (5.6,-1.1);
        \draw[<->] (4,-1.15) -- (5.4,-1.15) node[midway, below] {$x_1$};
        \draw[] (5.5,-1.2) -- (5.5,-1.2) node[midway, below] {$b$};
        \draw[<->] (5.6,-1.15) -- (7,-1.15) node[midway, below] {$q$};

        \end{tikzpicture}
        \caption{Illustration of the proof of Proposition \ref{c6fwko+}.}
        \label{Fig_rem_y>x}
\end{figure}

\end{proof}

\begin{proposition}\label{fwxgyc6+}
Let $x, y \in \Sigma^+$ such that $|x| > |y|$ and $|x| = k|y|+r$ where $0 \leq r < |y|$ and $k\ge 1$. 
\begin{itemize}
    \item If $r \neq 0$ and $xy$, $\theta(y)x$ agree on a prefix of length $|xy| - \ceil{\frac{r}{2}}$ but disagree at position $|xy| - \ceil{\frac{r}{2}} +1$, then $H(xy, \theta(y)x)=1$. 
    
    \item If $r=0$ and $xy$, $\theta(y)x$ agree on a prefix of length $|x| + \floor{\frac{|y|}{2}} $ but disagree at position $|x| + \floor{\frac{|y|}{2}}+1$, then $H(xy, \theta(y)x)= 1$. 
\end{itemize}
\end{proposition}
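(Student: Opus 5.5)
The plan is to turn the prefix agreement into a rigid structure for $x$, and then reduce the whole comparison to comparing a word with its own $\theta$-image.

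Let $n=|y|$. In both items the assumed common prefix has length at least $|x|$. In the first item, $\ceil{\frac{r}{2}}\le r<n$, so $|xy|-\ceil{\frac{r}{2}}>|x|$. In the second item the prefix length $|x|+\floor{\frac{n}{2}}$ is at least $|x|$ outright. Comparing the length-$|x|$ prefixes gives $x=\theta(y)\,x'$, where $x'$ is the prefix of $x$ of length $|x|-n$. So $x$ has period $n$ and begins with $\theta(y)$. Write $\theta(y)=zw$ with $|z|=r$ and $|w|=n-r$. Then $x=\theta(y)^k z=(zw)^kz$, and the last $n$ letters of $x$ are $wz$.

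Hence $xy$ and $\theta(y)x$ coincide on their first $|x|$ letters. Their last $n$ letters are $y=\theta(zw)=\theta(w)\theta(z)$ and $wz$, respectively. Everything therefore reduces to
\[
H(xy,\theta(y)x)=H(\theta(w)\theta(z),\,wz),
\]
where the hypothesis now says that these two length-$n$ words agree on a prefix of prescribed length and differ at the next position.

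The key auxiliary step is a small claim. \emph{If a word $u$ and $\theta(u)$ agree on their first $\floor{\frac{|u|}{2}}$ letters and disagree at the next one, then $|u|$ is odd and $H(u,\theta(u))=1$.} To prove it, write $u=u_1u_2$ with $|u_1|=\floor{\frac{|u|}{2}}$ when $|u|$ is even, and $u=u_1cu_2$ with $c\in\Sigma$ when $|u|$ is odd. Agreement on the first half gives $\theta(u_2)=u_1$, and applying $\theta$ gives $u_2=\theta(u_1)$, so the second halves agree as well. In the even case this yields $u=\theta(u)$, contradicting the assumed disagreement, so $|u|$ must be odd. In the odd case the only possible mismatch is the middle letter ($c$ against $\theta(c)$), so $H(u,\theta(u))=1$. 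This is the same mechanism as in the proof of Proposition \ref{c6fwko+}.

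\textbf{Case $r\ne0$.} Agreement on $|xy|-\ceil{\frac{r}{2}}$ letters means the last block agrees on its first $n-\ceil{\frac{r}{2}}=(n-r)+\floor{\frac{r}{2}}$ letters. First, $\theta(w)=w$. Second, $\theta(z)$ and $z$ agree on their first $\floor{\frac{r}{2}}$ letters and disagree at the next one. The claim applied to $u=\theta(z)$ then gives $H(\theta(z),z)=1$, and therefore $H(xy,\theta(y)x)=H(w,w)+H(\theta(z),z)=1$.

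\textbf{Case $r=0$.} Here $x=\theta(y)^k$ and the last block compares $y$ with $\theta(y)$. They agree on the first $\floor{\frac{n}{2}}$ letters and differ at the next, so the claim with $u=y$ gives $H=1$.

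The main obstacle is the bookkeeping at the start: checking that the agreement length really covers all of $x$, so that the periodic form $x=(zw)^kz$ is legitimate, and correctly identifying the last $n$ letters of $\theta(y)x$ as $wz$. After that, the symmetric half-agreement argument finishes both cases.
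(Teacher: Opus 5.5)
Your proof is correct and follows essentially the same route as the paper: you extract $x=\theta(y)^k z$ from the prefix agreement (the paper's $x'=\theta(y_2)$ and $y_1=\theta(y_1)$), reduce everything to the last $|y|$ letters, and finish with the half-agreement parity argument, which the paper carries out inline as the cases $r$ even (contradiction) and $r$ odd. Stating that parity argument as a standalone claim is a tidy improvement, since it also settles the $r=0$ case explicitly where the paper only says ``similar to Case 1''.
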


\begin{proof}
    Since $|x|>|y|$ and $|x| =k|y|+r$ with $k\ge 1$, let $x =y^{(1)} y^{(2)} \cdots y^{(k)} x' $ where $|y^{(i)}| =|y|$ for each $1 \leq i \leq k$, and $|x'|=r$. Then, $xy= y^{(1)} y^{(2)} \cdots y^{(k)} x' y   \text{ and } \theta(y)x = \theta(y)  y^{(1)} y^{(2)} \cdots y^{(k)} x'$ (see Figure \ref{Fig_rem_1_y>x}).

\begin{figure}[h]
    \centering
        \begin{tikzpicture}
        
        \draw[thick,-] (-2,-2.5) -- (7.5,-2.5) -- (7.5,1.5) -- (-2,1.5) -- (-2,-2.5);

        \draw[] (-1,-0.2) -- (-1,-0.2) node[midway, above] {$xy : $};
        \draw[] (-1,-1.4) -- (-1,-1.4) node[midway, above] {$\theta(y)x : $};

        \draw[-] (0,0) -- (7,0); 
        \draw[-] (0,0.1) -- (0,-0.1);
        \draw[-] (1,0.1) -- (1,-0.1);
        \draw[-] (2,0.1) -- (2,-0.1);
        \draw[-] (4.5,0.1) -- (4.5,-0.1);
        \draw[-] (5.5,0.1) -- (5.5,-0.1);
        \draw[-] (6,0.1) -- (6,-0.1);
        \draw[-] (7,0.1) -- (7,-0.1);
        \draw[<->] (0,0.2) -- (0.97,0.2) node[midway, above] {$y^{(1)}$};
        \draw[<->] (1,0.2) -- (1.97,0.2) node[midway, above] {$y^{(2)}$};
        \draw[] (3,0.2) -- (3,0.2) node[midway, above] {$\cdots$};
        \draw[<->] (4.5,0.2) -- (5.4,0.2) node[midway, above] {$y^{(k)}$};
        \draw[<->] (5.5,0.2) -- (5.98,0.2) node[midway, above] {$x'$};
        \draw[<->] (6,0.2) -- (7,0.2) node[midway, above] {$y$};

        \draw[-] (0,-1) -- (7,-1); 
        \draw[-] (0,-0.9) -- (0,-1.1);
        \draw[-] (1,-0.9) -- (1,-1.1);
        \draw[-] (2,-0.9) -- (2,-1.1);
        \draw[-] (4.5,-0.9) -- (4.5,-1.1);
        \draw[-] (5.5,-0.9) -- (5.5,-1.1);
        \draw[-] (6.5,-0.9) -- (6.5,-1.1);
        \draw[-] (7,-0.9) -- (7,-1.1);
        \draw[<->] (0,-1.2) -- (0.97,-1.2) node[midway, below] {$\theta(y)$};
        \draw[<->] (1,-1.2) -- (1.97,-1.2) node[midway, below] {$y^{(1)}$};
        \draw[] (3,-1.2) -- (3,-1.2) node[midway, below] {$\cdots$};
        \draw[<->] (4.5,-1.2) -- (5.48,-1.2) node[midway, below] {$y^{(k-1)}$};
        \draw[<->] (5.5,-1.2) -- (6.48,-1.2) node[midway, below] {$y^{(k)}$};
        \draw[<->] (6.5,-1.2) -- (7,-1.2) node[midway, below] {$x'$};
        \end{tikzpicture}
        \caption{Illustration of the proof of Proposition \ref{fwxgyc6+}.}
        \label{Fig_rem_1_y>x}
\end{figure}

We now have the following cases.

\begin{itemize}
    \item  {\underline{Case 1~:~}} $r \neq 0$. 
    
    Since $xy$ and $\theta(y)x$ agree on a prefix of length $|xy| - \ceil{\frac{r}{2}}$, we have
    $xy= (\theta(y))^k x' y$ and $\theta(y)x =  (\theta(y))^k \theta(y) x'$.
    Consider $y=y_1 y_2$ for some $y_1, y_2 \in \Sigma^*$ where $|y_2|=|x'|$.
    Then,  $xy= (\theta(y))^k x' y_1 y_2$ and $\theta(y)x =  (\theta(y))^k \theta(y_2) \theta(y_1) x'$. 
    Since $xy$ and $\theta(y)x$ agree on a prefix of length $|xy| - \ceil{\frac{r}{2}}$, we have 
    $x'=\theta(y_2)$ and $y_1 = \theta(y_1)$.
    Now, we have the following cases based on $r$:
     \begin{enumerate}
         \item If $r$ is even, then $|x'|=r \geq 2 $. Consider $x'= x_1 a b x_2$ for some $a, b \in \Sigma$, $x_1, x_2 \in \Sigma^*$ where $|x_1|=|x_2|$. Then 
          $xy= (\theta(y))^k x' y_1 \theta(x') = (\theta(y))^k x' y_1 \theta(x_2) \theta(b) \theta(a) \theta(x_1)$ and $\theta(y)x =  (\theta(y))^k x' y_1 x' = (\theta(y))^k x' y_1 x_1 ab x_2$.
          Now, $xy$ and $\theta(y)x$ disagree at position  $|xy| - {\frac{r}{2}} +1$ implies $\theta(a) \neq b$, i.e., $a \neq \theta(b)$. But $xy$ and $\theta(y)x$ agree on a prefix of length $|xy| - {\frac{r}{2}}$  implies $\theta(b)=a$, which is a contradiction as $a \neq \theta(b)$.

         \item  If $r$ is odd, then $x'= x_1 a x_2$ for some $a \in \Sigma$, $x_1, x_2 \in \Sigma^*$ where $|x_1|=|x_2|$. Then 
          $xy=  (\theta(y))^k x' y_1 \theta(x_2) \theta(a) \theta(x_1)$ and $\theta(y)x = (\theta(y))^k x' y_1 x_1 a x_2$. Since  $xy$, $\theta(y)x$ agree on a prefix of length $|xy| - \ceil{\frac{r}{2}}$ but disagree at position $|xy| - \ceil{\frac{r}{2}} +1$, we have $\theta(x_2)=x_1$ and $ \theta(a) \neq a$. This implies $\theta(x_1)=x_2$. Thus,  $H(xy, \theta(y)x)  = H(\theta(a), a)=1$.

     \end{enumerate}

    \item   {\underline{Case 2~:~}} $r = 0$. 

    Then $x'=\lambda$. Since $xy$, $\theta(y)x$ agree on a prefix of length $|x| + \floor{\frac{|y|}{2}}$,  $\theta(y)=y^{(i)}$ for all $1 \leq i \leq k$. Then, $xy= (\theta(y))^k y$ and $\theta(y)x =  (\theta(y))^k \theta(y)$. 
    Since $xy$, $\theta(y)x$ agree on a prefix of length $|x| + \floor{\frac{|y|}{2}}$ but disagree at position $|x| + \floor{\frac{|y|}{2}} +1$, similar to Case 1 one can show that $H(xy, \theta(y)x) = H(y, \theta(y))=1$.

\end{itemize}
\end{proof}

To characterize words $x$ and $y$ such that $x$ nearly $\theta$-commutes with $y$, we need the following result.
\begin{lemma}\label{lem1c619++}
For $w \in \Sigma^+$, $H(w, \theta(w))=1$ if and only if $w=\alpha a \theta(\alpha)$ for some $ \alpha \in \Sigma^*$ and $a \in \Sigma$ with  $\theta(a)\neq a$.
\end{lemma}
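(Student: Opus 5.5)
The plan is a direct positional comparison between $w$ and $\theta(w)$, exploiting the fact that the mismatches of $w$ against $\theta(w)$ come in symmetric pairs, with a single exception at the centre. Write $w = a_1 a_2 \cdots a_n$ with each $a_i \in \Sigma$. Because $\theta$ is an antimorphism, $\theta(w) = \theta(a_n)\theta(a_{n-1})\cdots\theta(a_1)$, so the $i$-th letter of $\theta(w)$ is $\theta(a_{n+1-i})$. Hence position $i$ is a mismatch exactly when $a_i \neq \theta(a_{n+1-i})$.

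The key step is the pairing of positions $i$ and $n+1-i$. Since $\theta$ is an involution on letters, $a_i = \theta(a_{n+1-i})$ holds if and only if $\theta(a_i) = a_{n+1-i}$, which is the condition for position $n+1-i$ to match. So position $i$ is a mismatch if and only if position $n+1-i$ is. It follows that mismatches at positions with $i \neq n+1-i$ occur in pairs. Consequently $H(w,\theta(w))$ is even, unless $n$ is odd and the middle position $m = (n+1)/2$ is itself a mismatch, which means $a_m \neq \theta(a_m)$.

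For the forward direction, assume $H(w,\theta(w)) = 1$. The count is odd, so $n$ must be odd and the unique mismatch must be at the middle position $m$. Write $w = \alpha a \beta$ with $|\alpha| = |\beta| = m-1$ and $a = a_m$. All positions other than $m$ match, so $a_i = \theta(a_{n+1-i})$ for every $i < m$. This says exactly that $\alpha = \theta(\beta)$, or equivalently $\beta = \theta(\alpha)$, and the middle mismatch gives $\theta(a) \neq a$.

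For the converse, take $w = \alpha a \theta(\alpha)$ with $\theta(a) \neq a$. Then $\theta(w) = \theta(\theta(\alpha))\,\theta(a)\,\theta(\alpha) = \alpha\,\theta(a)\,\theta(\alpha)$. Comparing this with $w$, the two words agree everywhere except at the middle letter, where $a$ stands against $\theta(a)$, so the distance is exactly 1.

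No step here is a real obstacle. The only point needing care is the index bookkeeping that justifies the pairing symmetry, and that follows directly from $\theta$ being an involution on letters.
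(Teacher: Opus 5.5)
Your proof is correct, and it rests on the same observation as the paper's: because $\theta$ is an involution on letters, a mismatch at position $i$ forces one at position $n+1-i$. The difference is in how that observation is used.

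The paper uses it in two separate passes. It first shows $|w|$ is odd by cutting an even-length $w$ into equal halves $xy$. If $x\neq\theta(y)$ then also $y\neq\theta(x)$, giving $H\geq 2$; otherwise $H=0$. It then runs an induction on $n$ that peels off the outer letters, shows $a_1=\theta(a_n)$, and reduces to $H(u,\theta(u))=1$ for the inner factor $u=a_2\cdots a_{n-1}$.

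You apply the pairing to all positions $i \leftrightarrow n+1-i$ at once. This gives the parity of $n$, the location of the unique mismatch at the centre, and the identity $\beta=\theta(\alpha)$ in a single step, with no induction. It also yields, for free, the slightly stronger fact that $H(w,\theta(w))$ is even except when $|w|$ is odd and the central letter is not fixed by $\theta$. The paper's inductive version replaces your explicit index bookkeeping with a recursive peeling step, but proves nothing more.
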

\begin{proof}
Let $w=a_1 a_2 \cdots a_n$ where $n \geq 1$ and each $a_i \in \Sigma$. We first show that if $H(w, \theta(w))=1$, then $n$ is odd. 
Assume by contradiction that $|w|$ is even. Then, for some $x, y \in \Sigma^*$ with $|x|=|y|$, $w=xy$ and $\theta(w)=\theta(y)\theta(x)$. 
\begin{itemize}
    \item If $x \neq \theta(y)$,  then $y \neq \theta(x)$ and  $H(w, \theta(w))\geq 2$, which is a contradiction. 
    \item If $x=\theta(y)$, then $y=\theta(x)$ and $H(w, \theta(w))=0$, which is a contradiction.
\end{itemize}
 Thus, $n$ is odd.
Using induction on $n$, we now prove that if $H(w, \theta(w))=1$, then $w=\alpha a \theta(\alpha)$ for some $ \alpha \in \Sigma^*$ and $a \in \Sigma$ with  $\theta(a)\neq a$. 
For $n=1$, $w=a_1$. Then, $H(w, \theta(w))=H(a_1,\theta(a_1)) = 1$ implies $a_1 \neq \theta(a_1)$. Thus, $w=a_1$ with $a_1 \neq \theta(a_1)$. Assume the statement be true for all words $w$ of odd length less than $n$. 
We now prove the result for $w$ of length $n \geq 3$.
 Let $u= a_2 \cdots a_{n-1}$. Then, $|u|$ is odd. Now, $H(w, \theta(w))=1$ implies $H(a_1 u a_n, \theta(a_n) \theta(u) \theta(a_1))=1$. 
 If $a_1 \neq \theta(a_n)$, then $a_n \neq \theta(a_1)$ which implies $H(w, \theta(w)) \geq 2$,  a contradiction. Thus, $a_1 = \theta(a_n)$. Then, 
\begin{align*}
    H(a_1 u a_n, \theta(a_n) \theta(u) \theta(a_1))=1 &\implies H(a_1 u \theta(a_1), a_1 \theta(u) \theta(a_1))=1\\
    &\implies H(u, \theta(u))=1.
\end{align*}
Then, by induction $u=\beta b \theta(\beta)$ for some $ \beta \in \Sigma^*$ and $b \in \Sigma$ with  $\theta(b)\neq b$. Therefore, $w= a_1 \beta b \theta(\beta) \theta(a_1) $, i.e., $w= a_1 \beta b \theta(a_1 \beta) $ where $\theta(b)\neq b$.\\
The converse is straightforward. 
\end{proof}

 In the following we give necessary and sufficient conditions on words $x$ and $y$ such that $x$ nearly $\theta$-commutes with $y$. 
\begin{theorem}\label{mainstrucrure}
For $x \in \Sigma^*, y \in \Sigma^+$, $x$ nearly $\theta$-commutes with $y$ if and only if
one of the following holds:
\begin{enumerate}
    \item $x=\theta(y)^i z y^j $ and $y=tz$  
    \item $x=\theta(y)^i \theta(z)$ and $ y= tz$ 
    \item $x=\theta(y)^i t y^j $ and $y=zt$
\end{enumerate}
where $i, j \geq 0$, $z \in \Sigma^+$, $H(z, \theta(z))=1$, $t \in \Sigma^* \cap \textup{P}_\theta$.
\end{theorem}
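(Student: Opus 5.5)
The plan is to prove sufficiency by direct computation and necessity by induction on $|x|$. The induction peels off copies of $\theta(y)$ from the front of $x$, and the base configurations are handled with Lemma \ref{44lkc6+} and Lemma \ref{lem1c619++}.

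\emph{Sufficiency.} This is a direct check that in each form the two words share a long common part and differ only inside one copy of $z$ versus $\theta(z)$.
In form 1, $y=tz$ gives $\theta(y)=\theta(z)t$. Then $xy=\theta(y)^i\, z(tz)^{j+1}$ and $\theta(y)x=\theta(y)^i\,\theta(z)(tz)^{j+1}$, so $H(xy,\theta(y)x)=H(z,\theta(z))=1$.
In form 2, $xy=\theta(y)^i\theta(z)tz$ and $\theta(y)x=\theta(y)^i\theta(z)t\theta(z)$, again at distance $H(z,\theta(z))=1$.
In form 3, $y=zt$ gives $\theta(y)=t\theta(z)$. Then $xy=\theta(y)^i(tz)^{j+1}t$ and $\theta(y)x=\theta(y)^i\, t\theta(z)(tz)^jt$, which differ only in the first block $z$ versus $\theta(z)$.

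\emph{Necessity, reduction step.} Assume $H(xy,\theta(y)x)=1$ and $|x|\ge |y|$, and write $x=px''$ with $|p|=|y|$.
If $p=\theta(y)$, cancel the common prefix $\theta(y)$ to get $H(x''y,\theta(y)x'')=1$. By induction $x''$ has one of the three forms, and prefixing $\theta(y)$ only raises $i$ by one.

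\emph{Necessity, base case $|x|\ge|y|$, $p\neq\theta(y)$.} The single mismatch must lie in the first $|y|$ letters, so $H(p,\theta(y))=1$ and the remainders agree exactly: $x''y=px''$.
If $x''=\lambda$, then $p=y$ and $H(y,\theta(y))=1$. Lemma \ref{lem1c619++} then gives form 1 with $t=\lambda$, $z=y$ and $i=j=0$.
Otherwise Lemma \ref{44lkc6+} gives $p=(uv)^m$, $x''=(uv)^ku$ and $y=(vu)^m$. Then $\theta(y)=(\theta(u)\theta(v))^m$, so $H(p,\theta(y))=m\cdot H(uv,\theta(u)\theta(v))$. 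Hence $m=1$ and $H(u,\theta(u))+H(v,\theta(v))=1$.
If $u=t\in\textup{P}_\theta$ and $H(v,\theta(v))=1$, then $y=vt$ and $x=(tv)^{k+1}t=ty^{k+1}$, which is form 3 with $z=v$.
If $v=t\in\textup{P}_\theta$ and $H(u,\theta(u))=1$, then $y=tu$ and $x=(ut)^{k+1}u=uy^{k+1}$, which is form 1 with $z=u$.
The degenerate cases $u=\lambda$ or $v=\lambda$ are absorbed by allowing $t=\lambda$.

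\emph{Necessity, case $|x|<|y|$.} Write $y=y'y''$ with $|y''|=|x|$. Then $xy=x\,y'y''$ and $\theta(y)x=\theta(y'')\,\theta(y')x$.
If $x=\theta(y'')$, the comparison reduces to $H(y'\theta(x),\theta(y')x)=1$. If $y'\in\textup{P}_\theta$ and $H(x,\theta(x))=1$, we get form 2 with $z=\theta(x)$, $t=y'$ and $i=0$. If instead $x\in\textup{P}_\theta$ and $H(y',\theta(y'))=1$, then $y=y'x$, which is form 3 with $t=x$, $z=y'$ and $i=j=0$.
If $x\neq\theta(y'')$, then $H(x,\theta(y''))=1$ and $y'y''=\theta(y')x$ holds exactly. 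Here I expect the main difficulty. Depending on whether $|y'|\ge|x|$ or not, I would try to split $y''$ and $x$ further and use Lemma \ref{44lkc6+} or Proposition \ref{ch4defctw32+2}. The goal is to show that $y'$ must be a $\theta$-palindrome prefix forcing $y=tz$ with $x=z$ (form 1, $i=j=0$), or else derive a contradiction with the one-mismatch constraint.

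A cleaner alternative, which I would try first, is to first locate the unique mismatch position relative to the block boundaries using Lemma \ref{lem1c619++}. That would show the two remainders must be $\theta$-palindromic exactly, which should pin down $y'$ directly.
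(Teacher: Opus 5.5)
Your decomposition matches the paper's. Your case $|x|<|y|$, with $y=y'y''$ and $|y''|=|x|$, is its Case 1. Your base case $p\neq\theta(y)$ is its Case 2.1: Lemma \ref{44lkc6+} applied to $x''y=px''$, which forces $m=1$. Your reduction step is its Case 2.2, and your induction on $|x|$ makes the paper's informal ``continue the process'' precise. The sufficiency computations, the reduction step, the base case $p\neq\theta(y)$ and the subcase $x=\theta(y'')$ are all correct as written.

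The one genuine hole is the subcase you flag as the main difficulty: $|x|<|y|$ with $x\neq\theta(y'')$. You leave it unproved and propose Lemma \ref{44lkc6+} or Proposition \ref{ch4defctw32+2}, but no periodicity argument is needed. The exact equality $y'y''=\theta(y')x$ is between words whose blocks have matching lengths, since $|\theta(y')|=|y'|$ and $|x|=|y''|$. Comparing the first $|y'|$ letters gives $y'=\theta(y')$, and the remaining letters give $y''=x$. Hence $1=H(x,\theta(y''))=H(x,\theta(x))$ and $y=y'x$. This is form 1 with $t=y'\in\textup{P}_\theta$, $z=x$ and $i=j=0$, exactly the paper's sub-case ``$x\neq\theta(y_2)$'' of its Case 1. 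With this one line your proof is complete, and the ``cleaner alternative'' via Lemma \ref{lem1c619++} is not needed. A minor remark: in your base case Lemma \ref{44lkc6+} already gives $u\in\Sigma^+$, so only $v=\lambda$ can be degenerate.
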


\begin{proof}
For $x \in \Sigma^*$ and $ y \in \Sigma^+$, let $x$ nearly $\theta$-commutes with $y$. Then, for $w=xy$ and  $u=\theta(y)x$, we have $H(w, u)=1$. Comparing $|x|$ and $|y|$, we have the following cases:
\begin{itemize}
\item \textbf{Case 1:}  $|y|\geq|x|$ : 

For $y_1, y_2 \in \Sigma^*$, let $y=y_1 y_2$ where $|y_2|=|x|$ (see Figure \ref{Fig_y>x}). Then,
        $ w = x y_1 y_2 \text{ and } u =\theta(y_2) \theta(y_1) x.$
        We now have the following cases:
        \begin{itemize}
            \item If $x \neq \theta(y_2)$, then as  $H(w, u)=1$, we have $y_1=\theta(y_1)$ and $y_2 =x$. This implies $x \neq \theta(x)$ and $H(w, u) = H(xy_1x, \theta(x)\theta(y_1)x) =H(x, \theta(x))=1$. Thus, $y=y_1y_2 =y_1x$ where $y_1 \in \textup{P}_\theta$ and $H(x,\theta(x)) =1$. 
        
            \item If $ y_1 \neq \theta(y_1)$, then by a similar argument we can show that $y=y_1x$ where $H(y_1,\theta(y_1)) =1$ and $x \in \textup{P}_\theta$. 
            \item The case when $ y_2 \neq x$ also concludes that $y=y_1 \theta(x)$ where $y_1\in \textup{P}_\theta$ and $H(x,\theta(x)) =1$.
              
        \end{itemize}

\begin{figure}[h]
    \centering
        \begin{tikzpicture}

        \draw[thick,-] (-2.5,-2.5) -- (7.5,-2.5) -- (7.5,1.5) -- (-2.5,1.5) -- (-2.5,-2.5);

        \draw[] (-1,-0.2) -- (-1,-0.2) node[midway, above] {$w=xy : $};
        \draw[] (-1,-1.4) -- (-1,-1.4) node[midway, above] {$u=\theta(y)x : $};
        
        \draw[-] (0,0) -- (7,0); 
        \draw[-] (3,0.1) -- (3,-0.1);
        \draw[<->] (0,0.2) -- (3,0.2) node[midway, above] {$x$};
        \draw[<->] (3.1,0.2) -- (3.9,0.2) node[midway, above] {$y_1$};
        \draw[-] (4,0.1) -- (4,-0.1);
        \draw[<->] (4,0.2) -- (7,0.2) node[midway, above] {$y_2$};

        \draw[-] (0,-1) -- (7,-1);
        \draw[-] (3,-0.9) -- (3,-1.1);
        \draw[<->] (0,-1.2) -- (3,-1.2) node[midway, below] {$\theta(y_2)$};
        \draw[<->] (3.1,-1.2) -- (3.9,-1.2) node[midway, below] {$\theta(y_1)$};
        \draw[-] (4,-0.9) -- (4,-1.1);
        \draw[<->] (4,-1.2) -- (7,-1.2) node[midway, below] {$x$};

        \end{tikzpicture}
        \caption{Illustration of the case $|y|\geq |x|$.}
        \label{Fig_y>x}
\end{figure}

\item \textbf{Case 2:} $|y|<|x|$ : 

For $x_1, x_2 \in \Sigma^+$, let $x=x_1 x_2$ where $|x_1|=|y|$. Then,
             $ w = x_1 x_2 y \text{ and } u =\theta(y) x_1 x_2$.
            We now have the following cases:
            \begin{itemize}
                \item \textbf{Case 2.1:} $x_1 \neq \theta(y)$ (see Figure \ref{Fig_y<x_1}): 
                
                Since  $H(w, u)=1$, we have $x_2 y = x_1 x_2$. Then by Lemma \ref{44lkc6+}, $x_1 = (pq)^i$, $x_2 = (pq)^kp$ and $y=(qp)^i$ for some $k \geq 0$, $i \geq 1$, $p \in \Sigma^+$ and $q \in \Sigma^*$. Thus,
                $w= (pq)^i (pq)^kp (qp)^i = (pq)^i (pq)^{k+i}p$ and $u= (\theta(p) \theta(q))^i (pq)^i (pq)^kp = (\theta(p) \theta(q))^i (pq)^{k+i}p$. 
                
                Now,  $x_1 \neq \theta(y)$  implies $(pq)^i \neq (\theta(p) \theta(q))^i$, i.e., $pq \neq \theta(p) \theta(q)$.
                Then as $H(w, u)=1$, we have $i=1$. Also, $H(w, u)=1$ implies either $\theta(p)\neq p$ and $\theta(q)=q$, or $\theta(p)=p$ and $\theta(q)\neq q$.
                
                If $\theta(p)\neq p$ and $\theta(q)=q$, then as $H(w, u)=1$, we have $H(p, \theta(p))=1$. 
                Thus, $x = (pq)^{k+1}p$ and $y=qp$ where $H(p, \theta(p))=1$ and $q \in \textup{P}_\theta$.
                
               
                If $\theta(q)\neq q$ and $\theta(p)=p$, then as $H(w, u)=1$, we have $H(q, \theta(q))=1$.
                Thus, $x = (pq)^{k+1}p$ and $y=qp$ where $H(q, \theta(q))=1$ and $p \in \textup{P}_\theta$.

\begin{figure}[h]
    \centering
        \begin{tikzpicture}

         \draw[thick,-] (-2.5,-2.5) -- (7.5,-2.5) -- (7.5,1.5) -- (-2.5,1.5) -- (-2.5,-2.5);
         
        \draw[] (-1,-0.2) -- (-1,-0.2) node[midway, above] {$w=xy : $};
        \draw[] (-1,-1.4) -- (-1,-1.4) node[midway, above] {$u=\theta(y)x : $};

        \draw[-] (0,0) -- (7,0);   
        \draw[-] (3,0.1) -- (3,-0.1);
        \draw[<->] (0,0.2) -- (3,0.2) node[midway, above] {$x_1$};
        \draw[<->] (3.1,0.2) -- (3.9,0.2) node[midway, above] {$x_2$};
        \draw[-] (4,0.1) -- (4,-0.1);
        \draw[<->] (4,0.2) -- (7,0.2) node[midway, above] {$y$};

        \draw[-] (0,-1) -- (7,-1);
        \draw[-] (3,-0.9) -- (3,-1.1);
        \draw[<->] (0,-1.2) -- (3,-1.2) node[midway, below] {$\theta(y)$};
        \draw[<->] (3.1,-1.2) -- (5.9,-1.2) node[midway, below] {$x_1$};
        \draw[-] (6,-0.9) -- (6,-1.1);
        \draw[<->] (6,-1.2) -- (7,-1.2) node[midway, below] {$x_2$};

        \end{tikzpicture}
        \caption{Illustration of the proof of Theorem \ref{mainstrucrure} in the Case 2.1.}
        \label{Fig_y<x_1}
\end{figure}

 \item \textbf{Case 2.2:} $x_2y \neq x_1 x_2$ (see Figure \ref{Fig_y<x_2}):
             
             Since  $H(w, u)=1$, we have $x_1 = \theta(y)$ and 
              $$  w = \theta(y)  x_2 y, ~~ u =\theta(y) \theta(y) x_2,~~~H(x_2y,\theta(y)x_2)=1.$$
            We now have the following cases:
            
            If $|y|\geq |x_2|$, then similar to Case 1,  we have the following forms of $x$ and $y$.
            \begin{enumerate}
              \item 
              $x =\theta(y)  x_2$ and $y= y' x_2  \text{ where }  \theta(y') = y', H(x_2, \theta(x_2))=1$.
              \item
              $x= \theta(y) x_2$ and $y=y' x_2  \text{ where } \theta(x_2)=x_2$,  and $H(y', \theta(y'))=1$.
              \item
              $x= \theta(y) x_2$ and $y=y' \theta(x_2)  \text{ where } \theta(y')=y'$,  and $H(x_2, \theta(x_2))=1$.
            \end{enumerate}
            If $|y|<|x_2|$, then similar to Case 2.1 and initial part of Case 2.2, we have the following forms of $w$.
            \begin{enumerate}
                 \item 
               $x = \theta(p_1)q_1  (p_1q_1)^{m+1}p_1 $ and $y = q_1p_1
               \text{ where } H(p_1, \theta(p_1))=1, \theta(q_1)=q_1, m \geq 0$.
                \item 
              $x =p_1\theta(q_1) (p_1q_1)^{m+1}p_1$ and $y=q_1p_1$
               where $H(q_1, \theta(q_1))=1$, $\theta(p_1)=p_1$, $m \geq 0$. 
                \item 
               $w = \theta(y) \theta(y)  x_2' y, u =  \theta(y) \theta(y) \theta(y)  x_2'
              \text{ where } x_2 = \theta(y) x_2'$ for some $x_2' \in \Sigma^+$.
            \end{enumerate}

\begin{figure}[h]
    \centering
        \begin{tikzpicture}
         \draw[thick,-] (-2.5,-2.5) -- (7.5,-2.5) -- (7.5,1.5) -- (-2.5,1.5) -- (-2.5,-2.5);
        
        \draw[] (-1,-0.2) -- (-1,-0.2) node[midway, above] {$w=xy : $};
        \draw[] (-1,-1.4) -- (-1,-1.4) node[midway, above] {$u=\theta(y)x : $};

        \draw[-] (0,0) -- (7,0);   
        \draw[-] (3,0.1) -- (3,-0.1);
        \draw[<->] (0,0.2) -- (3,0.2) node[midway, above] {$\theta(y)$};
        \draw[<->] (3.1,0.2) -- (3.9,0.2) node[midway, above] {$x_2$};
        \draw[-] (4,0.1) -- (4,-0.1);
        \draw[<->] (4,0.2) -- (7,0.2) node[midway, above] {$y$};

        \draw[-] (0,-1) -- (7,-1);
        \draw[-] (3,-0.9) -- (3,-1.1);
        \draw[<->] (0,-1.2) -- (3,-1.2) node[midway, below] {$\theta(y)$};
        \draw[<->] (3.1,-1.2) -- (5.9,-1.2) node[midway, below] {$\theta(y)$};
        \draw[-] (6,-0.9) -- (6,-1.1);
        \draw[<->] (6,-1.2) -- (7,-1.2) node[midway, below] {$x_2$};

        \end{tikzpicture}
        \caption{Illustration of the proof of Theorem \ref{mainstrucrure} in the Case 2.2.}
        \label{Fig_y<x_2}
\end{figure}

            If $w = \theta(y) \theta(y)  x_2' y$ and $  u =  \theta(y) \theta(y) \theta(y)  x_2'$, then let us continue the process depending on whether $|y|\geq |x_2'|$ or $|y|<|x_2'|$. Since $|w|$ is finite, after some finite number of steps, $w$ and $u$ will be in the forms $ (\theta(y))^{m_1}  g y$ and $  (\theta(y))^{m_1} \theta(y) g$, respectively, where $ g \in \Sigma^*, m_1 \geq 1, |y|\geq |g|$.
          Then, using Case $1$, it can be shown that 
          $x=(\theta(z)t)^{m_1} z$ and $y=tz$ or $x=(t\theta(z))^{m_1} t$ and $y=zt$ or  $x=(zt)^{m_1} z$ and $y=t\theta(z)$  where $H(z, \theta(z))=1$, $t \in \textup{P}_\theta$.
            \end{itemize}
\end{itemize}

Now, collecting all the results obtained in Case 1 and Case 2, with all their  subcases, we have that if $y$ is in form $tz$, then $x$ is either in the form $z$ or $(zt)^{l}z$ or $(\theta(z)t)^{l}z$ or $(\theta(z)t)^{l} (zt)^{l_1}z$ or $\theta(z)$ or $(\theta(z)t)^{l}\theta(z)$
where $l, l_1 \geq 1$. Thus, if $y$ is in form $tz$, then $x$ is in form either 
$(\theta(z)t)^{r_1} (zt)^{r_2} z$ or $(\theta(z)t)^{r_1} \theta(z)$, i.e., 
either $\theta(y)^{r_1} z y^{r_2} $ or $\theta(y)^{r_1} \theta(z)$ 
where $r_1, r_2 \geq 0$ (proving cases 1 and 2 of the statement).
Also, if $y$ is in form $zt$, then $x$ is in the form $t$ or $(tz)^{l}t$ or $(t\theta(z))^{l}t$ or $(t \theta(z))^{l} (tz)^{l_1}t$ where $l, l_1 \geq 1$. 
Thus, if  $y$ is in form $zt$, then $x$ is in the form 
$(t \theta(z))^{l'} (tz)^{l_1'}t$, i.e., $\theta(y)^{l'} t y^{l_1'} $ 
where $l', l_1' \geq 0$ (proving case 3 of the statement). \\

Conversely, let $x=\theta(y)^i z y^j $ and $y=tz$  or $x=\theta(y)^i \theta(z)$ and $ y= tz$ or $x=\theta(y)^i t y^j $ and $y=zt$ where $i, j \geq 0$, $H(z, \theta(z))=1$, $t \in \textup{P}_\theta$. Then 
$x=(\theta(z)t)^i (zt)^j z$ and $y=tz$ or $x=(\theta(z)t)^i \theta(z)$ and $ y= tz$ or $x=(t\theta(z))^i (tz)^j t$ and $y=zt$.\\
If $x=(\theta(z)t)^i (zt)^j z$ and $y=tz$, then $x y =(\theta(z)t)^i (zt)^{j+1} z$ and $\theta(y)x =(\theta(z)t)^{i+1} (zt)^j z$. This implies $H(xy, \theta(y)x)=H(z, \theta(z))=1$.\\
If $x=(\theta(z)t)^i \theta(z)$ and $ y= tz$, then $xy = (\theta(z)t)^{i+1} z$ and $\theta(y)x = (\theta(z)t)^{i+1} \theta(z) $. This implies $H(xy, \theta(y)x)=H( z, \theta(z))=1$.\\
If $x=(t\theta(z))^i (tz)^j t$ and $y=zt$, then $xy=(t\theta(z))^i tz (tz)^j t$ and $\theta(y)x= (t \theta(z))^i (t\theta(z)) (tz)^j t$. This implies $H(xy, \theta(y)x)=H(z, \theta(z))=1$. 

\end{proof}

We observe the following from Theorem \ref{mainstrucrure}.
\begin{observation}\label{existenceofxfory}
If a word $y$ can not be expressed in the form $tz$ or $zt$ where $H(z, \theta(z))=1$ and $t \in \textup{P}_\theta$, then there does not exist any word $x$ such that $x$ nearly $\theta$-commutes with $y$, and vice versa.
\end{observation}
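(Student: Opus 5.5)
This observation is essentially a restatement of Theorem \ref{mainstrucrure}, so the plan is to read off both directions from that characterization rather than redo any case analysis on $xy$ and $\theta(y)x$.

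\emph{Forward direction.} Argue by contrapositive. Suppose some $x$ nearly $\theta$-commutes with $y$. If $y=\lambda$ then $xy=\theta(y)x$, so $H(xy,\theta(y)x)=0$; hence $y\in\Sigma^+$. Theorem \ref{mainstrucrure} then applies, and one of its three cases holds. In cases 1 and 2 we get $y=tz$, and in case 3 we get $y=zt$, where in every case $t\in \textup{P}_\theta$ and $H(z,\theta(z))=1$. So $y$ can be written in the required form, which contradicts the hypothesis.

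\emph{Converse (``vice versa'').} Suppose $y=tz$ or $y=zt$ with $t\in\textup{P}_\theta$ and $H(z,\theta(z))=1$. We need to exhibit one witness $x$. Take $i=j=0$ in the theorem:
\begin{itemize}
\item if $y=tz$, choose $x=z$ (case 1), or equally $x=\theta(z)$ (case 2);
\item if $y=zt$, choose $x=t$ (case 3).
\end{itemize}
The converse part of Theorem \ref{mainstrucrure}, which amounts to the explicit computation $H(xy,\theta(y)x)=H(z,\theta(z))=1$, shows that this $x$ nearly $\theta$-commutes with $y$.

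The only step needing care is the boundary bookkeeping. The theorem assumes $y\in\Sigma^+$, which is why $y=\lambda$ is excluded separately above. It also allows $t=\lambda$; this is harmless, since $\lambda\in\textup{P}_\theta$ and then $y=z$ with $x=z$ or $x=\lambda$. Everything else is a direct specialization of the theorem.
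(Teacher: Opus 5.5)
Your proposal is correct and is essentially the paper's argument: the paper records this observation as an immediate consequence of Theorem \ref{mainstrucrure}. You do exactly that, reading off $y=tz$ or $y=zt$ from its three cases and using the $i=j=0$ instances $x=z$, $x=\theta(z)$, $x=t$ as witnesses for the converse, with your separate handling of $y=\lambda$ (where $H(xy,\theta(y)x)=0$) a welcome bit of bookkeeping the paper leaves implicit.
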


Let us study the words $y$ that can be expressed in the form $tz$ or $zt$ as in Observation \ref{existenceofxfory}.

\begin{lemma}\label{vbcxdfer2101}
    If $w=tz$ or $zt$ where $t \in \textup{P}_\theta$ and $H(z, \theta(z))=1$, then $w \notin \textup{P}_\theta$ and $w$ is a primitive word.
\end{lemma}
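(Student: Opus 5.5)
The plan splits into the two claims of Lemma~\ref{vbcxdfer2101}. Both reduce to earlier results, using only the relation $H(z,\theta(z))=1$ and the fact that $\theta(t)=t$. Note first that $H(z,\theta(z))=1$ forces $z\neq\theta(z)$, so $z\in\Sigma^+$.

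\emph{Step 1: $w\notin \textup{P}_\theta$.} Take $w=tz$ and suppose $w=\theta(w)$. Since $\theta(w)=\theta(z)t$, this reads $tz=\theta(z)t$, so $t$ $\theta$-commutes with $z$.
\begin{itemize}
\item If $t=\lambda$, we get $z=\theta(z)$, a contradiction.
\item Otherwise Proposition~\ref{ch4defctw32+2} gives $t=p(qp)^i$ and $z=qp$ with $p,q\in\textup{P}_\theta$. Then $\theta(z)=pq$, so $H(z,\theta(z))=H(qp,pq)$. Theorem~\ref{conalmc5++} says this is never $1$, a contradiction.
\end{itemize}
For $w=zt$, the equality $w=\theta(w)$ reads $zt=t\theta(z)$. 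This is exactly $t\,\theta(z)=\theta(\theta(z))\,t$, so $t$ $\theta$-commutes with $\theta(z)$. The same argument applies: $\theta(z)=qp$ and $z=pq$, hence $H(z,\theta(z))=H(pq,qp)\neq 1$, again a contradiction.

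\emph{Step 2: primitivity.} Take $w=tz$ and suppose $w=u^n$ with $n\ge 2$ and $|u|=m$.
\begin{itemize}
\item The key observation is that $H(tz,\, t\theta(z))=H(z,\theta(z))=1$. Moreover $t\theta(z)=\theta(zt)$, because $\theta(t)=t$.
\item Now $zt$ is a conjugate of $w=u^n$, and a conjugate of an $n$-th power is an $n$-th power of a conjugate of $u$. So $zt=v^n$ with $|v|=m$, and hence $\theta(zt)=\theta(v)^n$.
\item Thus $H(u^n,\theta(v)^n)=1$. Both words have period $m$, so positions $k$ and $k+m$ carry the same pair of letters for every $k$. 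Any mismatch therefore repeats in each of the $n\ge 2$ blocks, which makes the Hamming distance a multiple of $n$. It then cannot equal $1$, a contradiction.
\end{itemize}
The case $w=zt$ is symmetric. Here $H(zt,\theta(z)t)=1$ and $\theta(z)t=\theta(tz)$, where $tz$ is a conjugate of $w$. The same periodicity argument then applies.

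The only delicate point is Step 2, where one must pick the right comparison word. Comparing $w$ with $\theta(w)$ directly does not yield distance $1$. One must instead compare $w$ with $\theta$ of a conjugate of $w$, and this is where $\theta(t)=t$ is used.
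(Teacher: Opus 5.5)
Your proof is correct. For $w\notin\textup{P}_\theta$ you follow essentially the paper's route. The paper applies Lemma~\ref{44lkc6+} to $tz=\theta(z)t$ to obtain $\theta(z)=(pq)^j$, $z=(qp)^j$ with $p,q\in\textup{P}_\theta$, and then applies Theorem~\ref{conalmc5++} to these conjugates. You get $z=qp$, $\theta(z)=pq$ in one step from Proposition~\ref{ch4defctw32+2}, and you also write out the $zt$ case (as $t$ $\theta$-commuting with $\theta(z)$), which the paper only calls similar.

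For primitivity the routes genuinely differ. The paper writes $tz=u^k$ with $u=u_1u_2$ and splits into the cases $|t|>|u|$, $|t|=|u|$ and $|t|<|u|$. In the first two cases it uses $t\in\textup{P}_\theta$ to force $u_1,u_2\in\textup{P}_\theta$ and hence $z=\theta(z)$. In the third it computes $H(z,\theta(z))=k\,H(u_2,\theta(u_2))$ block by block. You instead compare $tz$ with $\theta(zt)=t\theta(z)$, which differ in exactly one position. Both are $n$-th powers of words of length $|u|$, because a conjugate of $u^n$ is $v^n$ for a conjugate $v$ of $u$. Hence $H(u^n,\theta(v)^n)=n\,H(u,\theta(v))\neq 1$. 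This single identity subsumes the paper's Case~III computation (take $v=u_2t$) and makes Cases~I--II unnecessary. It also treats $tz$ and $zt$ uniformly, and the only property of $t$ it uses is $\theta(t)=t$. The paper's case analysis gives in exchange only the unneeded extra fact that $z$ would be a $\theta$-palindrome in Cases~I--II.

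Incidentally, your comparison word also yields the first claim without Proposition~\ref{ch4defctw32+2}. If $tz=\theta(z)t$, then $H(\theta(z)t,\,t\theta(z))=H(tz,\,t\theta(z))=H(z,\theta(z))=1$, contradicting Theorem~\ref{conalmc5++} for the conjugates $\theta(z)t$ and $t\theta(z)$; the $zt$ case is symmetric.
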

\begin{proof}
We only prove the case when $w=tz$ as the proof is similar when $w=zt$. 
   Let $w= tz$. 
    We assume by the contrary that $w \in \textup{P}_\theta$.  
    Then, $w=tz=\theta(z)t$. 
    If $t =\lambda$, then $w=z \in\textup{P}_\theta$ which is a contradiction. Let $t \in \Sigma^+$. 
    Then by Lemma \ref{44lkc6+},  $w=tz=\theta(z)t$ gives $\theta(z)=(pq)^j$, $t=(pq)^ip$ where $p, q \in \textup{P}_\theta$, $j \geq 1$, $i \geq 0$. Now, $H(z, \theta(z))=1$ implies that $H((qp)^j, (pq)^j)=1$, which is a contradiction as $(pq)^j \in C((qp)^j)$ and Hamming distance between two conjugates can never be one (Theorem \ref{conalmc5++}). Hence, $w \notin \textup{P}_\theta$. 
    
We now prove the second part that if $w=tz$ then $w$ is a primitive word. Let us assume the contrary that,
    \begin{equation}\label{pqoadfw1}
         w=tz=u^k   \text{ for some nonempty primitive word } u  \text{ and } k \geq 2.
    \end{equation}
   
    We have the following cases:
    \begin{itemize}
        \item Case I : $|t|>|u|$ : Then from Equation (\ref{pqoadfw1}), we have, $t=u^l u_1$ and $z=u_2 u^{k-l-1}$ where $1 \leq l < k, u=u_1 u_2$. Since, $t \in \textup{P}_\theta$ we have $t=u^l u_1=\theta(u_1) \theta(u)^l=\theta(t)$ which implies that  $u_1, u_2\in \textup{P}_\theta$. 
        Then, $\theta(z) = \theta(u)^{k-l-1} \theta(u_2) = \theta(u_2) (\theta(u_1) \theta(u_2))^{k-l-1} = u_2 (u_1 u_2)^{k-l-1} =z$, which implies that  $H(z, \theta(z))= H(z,z)=0$, a contradiction to our assumption that $H(z, \theta(z))=1$.
        

        \item Case II : $|t| = |u|$ :  Then from Equation (\ref{pqoadfw1}), we have $t=u$ and $z=u^{k-1}$. Since $u=t \in \textup{P}_\theta$,  and hence  $z =u^{k-1} \in \textup{P}_\theta$, which implies that  $H(z, \theta(z))= H(z,z)=0$, a contradiction.

        \item Case III : $|t| < |u|$ :  Then from Equation (\ref{pqoadfw1}), we have, $t=u_1$ and $z=u_2 u^{k-1}$ where $u=u_1 u_2$ and $|u_1|=|t|$. Since $t=u_1 \in \textup{P}_\theta$ we get $z = u_2(tu_2)^{k-1}$ and 
    \begin{align*}
    H(z, \theta(z))&= H(u_2 u^{k-1}, \theta(u)^{k-1} \theta(u_2) )\\
    &=  H(u_2 (u_1 u_2)^{k-1}, \theta(u_2) (\theta(u_1) \theta(u_2))^{k-1} ) \\
    &=  H(u_2 (tu_2)^{k-1}, \theta(u_2) (t\theta(u_2))^{k-1}) \\
        &= H(u_2, \theta(u_2) ) + H( (tu_2)^{k-1}, (t \theta(u_2))^{k-1} )\\
        &= H(u_2, \theta(u_2) )+ (k-1) H(t,t) +  (k-1) H(u_2, \theta(u_2) )\\
        &= k H(u_2, \theta(u_2) )
        \end{align*}
        Since $k \geq 2$, $H(z, \theta(z))$ can never be one, which is a contradiction.
        
    \end{itemize}
    Therefore, if $w=tz$, then  $w $ is a primitive word and $w\notin \textup{P}_\theta$. 
\end{proof}

From Theorem \ref{mainstrucrure} and Lemma \ref{vbcxdfer2101}, we have the following observation.
\begin{corollary}\label{wsxcvbgfdr}
    For $x, y \in \Sigma^*$, if  $x$ nearly $\theta$-commutes with $y$, then $y$ is a primitive word.
    \end{corollary}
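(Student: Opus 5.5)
The corollary follows by chaining Theorem \ref{mainstrucrure} with Lemma \ref{vbcxdfer2101}; the only point needing care is the empty word. Suppose $x$ nearly $\theta$-commutes with $y$, so that $H(xy,\theta(y)x)=1$.

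First I rule out $y=\lambda$. If $y=\lambda$, then $xy=x=\theta(y)x$, so the Hamming distance is $0$, not $1$. Hence $y\in\Sigma^+$, and Theorem \ref{mainstrucrure} applies since its hypothesis is $x\in\Sigma^*$, $y\in\Sigma^+$.

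Next I read off the shape of $y$. The forward direction of Theorem \ref{mainstrucrure} places $(x,y)$ in one of its three cases. In cases 1 and 2, $y=tz$; in case 3, $y=zt$. In every case $z\in\Sigma^+$, $H(z,\theta(z))=1$ and $t\in\textup{P}_\theta$. This is exactly the hypothesis of Lemma \ref{vbcxdfer2101} with $w=y$, so the lemma yields that $y$ is primitive (and, incidentally, $y\notin\textup{P}_\theta$, matching Observation \ref{obseryPtheta}).

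No real obstacle arises. The substantive work is the primitivity argument already carried out in Lemma \ref{vbcxdfer2101}: the three-way case split on $|t|$ versus the primitive root $u$, which in each case forces $H(z,\theta(z))\in\{0\}\cup k\mathbb{N}$ with $k\ge 2$. The only thing to check here is that the degenerate case $y=\lambda$ is excluded, as done above.
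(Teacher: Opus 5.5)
Your proof is correct and follows the paper's own argument: Theorem~\ref{mainstrucrure} puts $y$ in the form $tz$ or $zt$ with $t\in\textup{P}_\theta$ and $H(z,\theta(z))=1$, and Lemma~\ref{vbcxdfer2101} then gives primitivity. Your explicit exclusion of $y=\lambda$ (where $H(x,x)=0$) is a small but welcome addition, since the corollary is stated for $y\in\Sigma^*$ while the paper invokes Theorem~\ref{mainstrucrure}, which requires $y\in\Sigma^+$, without noting this.
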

\begin{proof}
    Let $x$ and $y$ be such that $x$ nearly $\theta$-commutes with $y$. Then by Theorem \ref{mainstrucrure}, $y$ is in the form $tz$ or $zt$ where $t$ is a $\theta$-palindrome and $H(z, \theta(z))=1$ and by Lemma \ref{vbcxdfer2101}, $y$ is a primitive word.
\end{proof}

\section{Relation on words that nearly $\theta$-commute}\label{relation}

We now define a relation $R_\theta$ on $\Sigma^*$ as follows: for $x, y \in \Sigma^*$, $x R_\theta y$ means that $x$ nearly $\theta$-commutes with $y$.
In this section, we discuss some properties of $R_\theta$.

 We first show that the relation $R_\theta$ is not reflexive in general. Consider $x=aa$ where $\theta(a)=b$ and $\theta(b) =a$. Since $H(xx, \theta(x)x)=H(aaaa, bbaa)= 2$, $x R_\theta x$ does not hold. We now discuss the necessary and sufficient conditions for a given word $x$ such that $xR_\theta x$. 

\begin{proposition}
    For $x \in \Sigma^+$, $xR_\theta x$ holds if and only if $x$ is in form $\alpha a \theta(\alpha)$ for some $\alpha \in \Sigma^*$, $a \in \Sigma$ with $\theta(a) \neq a$.
\end{proposition}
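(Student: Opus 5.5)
The natural route is to reduce the statement to Lemma \ref{lem1c619++}. The key observation is that when $y=x$ the two words being compared share a common suffix. Indeed, $xR_\theta x$ means $H(xx,\theta(x)x)=1$, and both $xx$ and $\theta(x)x$ end in the same block $x$, occupying the last $|x|$ positions. Since the Hamming distance is additive over aligned blocks of equal length, we get
\[
H(xx,\theta(x)x)=H(x,\theta(x))+H(x,x)=H(x,\theta(x)).
\]
Hence $xR_\theta x$ holds if and only if $H(x,\theta(x))=1$.

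Both directions now follow from Lemma \ref{lem1c619++} applied to $w=x\in\Sigma^+$. That lemma says $H(x,\theta(x))=1$ exactly when $x=\alpha a\theta(\alpha)$ with $\alpha\in\Sigma^*$, $a\in\Sigma$ and $\theta(a)\neq a$.

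For the converse direction one can also check directly, without the lemma. If $x=\alpha a\theta(\alpha)$, then $\theta(x)=\alpha\theta(a)\theta(\alpha)$, so $x$ and $\theta(x)$ differ only in the middle letter. This gives $H(xx,\theta(x)x)=1$.

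As a consistency check, one could compare with Theorem \ref{mainstrucrure}. Taking $t=\lambda$, $z=y=x$ and $i=j=0$ in case 1 gives exactly the sufficiency direction. This agreement is reassuring, but the argument does not need it.

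There is no real obstacle here. The only point needing care is the block decomposition: the second factor is literally identical in both words, so that block contributes nothing to the distance. Everything else is handled by Lemma \ref{lem1c619++}.
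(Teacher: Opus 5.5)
Your proposal is correct and follows the paper's proof: you reduce $H(xx,\theta(x)x)$ to $H(x,\theta(x))$ because the two words share the trailing block $x$, and then apply Lemma \ref{lem1c619++} in both directions. Your direct check of the converse, $\theta(\alpha a\theta(\alpha))=\alpha\theta(a)\theta(\alpha)$, is a valid optional supplement.
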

\begin{proof}
    For $x \in \Sigma^+$, let $xR_\theta x$ holds, i.e., $x$ nearly $\theta$-commutes with $x$. Then $H(xx, \theta(x)x)=1$, i.e., $H(x, \theta(x))=1$. Then by Lemma \ref{lem1c619++}, $x = \alpha a \theta(\alpha)$ for some $\alpha \in \Sigma^*$, $a \in \Sigma$ with $\theta(a) \neq a$.

    Conversely, let  $x = \alpha a \theta(\alpha)$ for some $\alpha \in \Sigma^*$, $a \in \Sigma$ with $\theta(a) \neq a$. Then $H(x, \theta(x))=1$. This implies  $H(xx, \theta(x)x)=1$, i.e., $xR_\theta x$ holds.
\end{proof}

Now, with the help of the following example, we show that $R_\theta$ is not a symmetric relation.
 We also show existence of words $x$ and $y$ such that if $x$ nearly $\theta$-commutes with $y$, then $y$ also nearly $\theta$-commutes with $x$.
\begin{example}\label{exsym1}
    Let $\theta$ be such that $\theta(a)=b$.
    \begin{enumerate}
        \item Consider $x=abb$ and $y=ab abb$. Then, $$H(xy, \theta(y)x)=H(abb ab abb, aab ab abb)=1$$
        and $x$ nearly $\theta$-commutes with $y$. However, $$H(yx, \theta(x)y)=H(ab abb abb, aab ab abb)=3$$ 
        and $y$ does not nearly $\theta$-commute with $x$.
        
        \item Let $x= abb ab aab ab aab$ and $y=ab a ab$. Then, 
           $$ H(xy, \theta(y)x) =H(abb ab aab ab aab ab a ab, ab b ab abb ab aab ab aab )
            =1$$
            $$ H(yx, \theta(x)y)=H(ab a ab abb ab aab ab aab, abb ab abb ab aab ab a ab)=1$$  
       Hence,  $x$ nearly $\theta$-commutes with $y$ and $y$ nearly $\theta$-commutes with $x$. 
    \end{enumerate}
\end{example}

It is evident from Example \ref{exsym1} that if $x$ nearly $\theta$-commutes with $y$
 then $y$ needs not nearly $\theta$-commute with $x$. We now characterize words $x\in \Sigma^+$ and $y \in \Sigma^+$ such that if $xR_\theta y$ holds, then $y R_\theta x$ holds and vice-versa.

Using Lemma \ref{vbcxdfer2101}, we now characterize words $x$ and $y$ such that $xR_\theta y $  and $y R_\theta x$ hold.

\begin{theorem}
    For $x, y \in \Sigma^+$,  $xR_\theta y $  and $y R_\theta x$ hold if and only if one of the following holds:
    \begin{enumerate}
        \item  $x=\theta(y)^i z y^i$ and $y=tz$
    
        \item $x= \theta(y)$ and $y=\theta(z)$
    
        \item    $x=\theta(y)^i t y^{i+1} $ and $y=zt$

    \end{enumerate}
    where  $i \geq 0$, $t \in \textup{P}_\theta$,  $H(z, \theta(z))=1$, and  $H(y, \theta(y))=1$.
\end{theorem}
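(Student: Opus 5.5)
We would prove both directions by applying Theorem~\ref{mainstrucrure} to each of the relations $xR_\theta y$ and $yR_\theta x$ and intersecting the resulting descriptions. For the forward direction, $xR_\theta y$ gives, by Theorem~\ref{mainstrucrure}, one of three shapes:
\begin{itemize}
\item[(a)] $x=\theta(y)^i z y^j$ with $y=tz$;
\item[(b)] $x=\theta(y)^i\theta(z)$ with $y=tz$;
\item[(c)] $x=\theta(y)^i t y^j$ with $y=zt$;
\end{itemize}
where $t\in \textup{P}_\theta$ and $H(z,\theta(z))=1$. By Corollary~\ref{wsxcvbgfdr} applied to both relations, $x$ and $y$ are both primitive; by Observation~\ref{obseryPtheta}, neither $x$ nor $y$ is in $\textup{P}_\theta$. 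In each shape we will write $yx$ and $\theta(x)y$ explicitly as products of the blocks $t,z,\theta(z)$ (using $\theta(y)=\theta(z)t$ or $t\theta(z)$) and compute $H(yx,\theta(x)y)$ block by block.

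In shape (a) we get
\[
yx=tz(\theta(z)t)^i z(tz)^j,\qquad \theta(x)y=(\theta(z)t)^j\theta(z)(tz)^{i+1}.
\]
The plan is to compare the two words from both ends. If $i\neq j$, the prefixes $y\theta(y)^{\min(i,j)}\cdots$ and $\theta(y)^{\min(i,j)}\cdots$ are misaligned by whole copies of $y$ versus $\theta(y)$. Each aligned pair $(y,\theta(y))$ contributes $H(y,\theta(y))\ge 1$, and the misaligned copies of $z$ versus $\theta(z)$ contribute further mismatches, so the total exceeds $1$. Hence $i=j$. With $i=j$, counting mismatches shows that $H(yx,\theta(x)y)=1$ exactly when the leading block contributes $H(y,\theta(y))=1$ and everything else agrees. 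This gives item 1, together with the condition $H(y,\theta(y))=1$.

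Shape (c) is treated symmetrically. Writing $y=zt$, $\theta(y)=t\theta(z)$, the balance of lengths forces the exponent of $y$ to exceed that of $\theta(y)$ by exactly one, which gives item 3. In shape (b), the extra $\theta(y)^i$ on the left of $x$ has no counterpart in $\theta(x)y=y^i z\,tz$ unless $i=0$. With $i=0$ we have $x=\theta(z)$ and $\theta(x)=z$, so $H(yx,\theta(x)y)=H(tz\theta(z),ztz)$; requiring this to equal $1$ should force $t=\lambda$, i.e.\ $y=z$. Since $\theta(z)$ also satisfies $H(\theta(z),z)=1$, we may rename $z\mapsto\theta(z)$, which gives item 2 (and $H(y,\theta(y))=1$ then holds automatically). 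To rule out $t\neq\lambda$ and $i\ge 1$ we will argue by lengths and prefixes, invoking Lemma~\ref{44lkc6+}: any equality between long shifted factors would make $y$ imprimitive or a $\theta$-palindrome, contradicting Lemma~\ref{vbcxdfer2101}.

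The converse is a direct computation in each of the three cases. For example, in item 1,
\[
yx=y\,\theta(y)^i z y^i,\qquad \theta(x)y=\theta(y)^i\theta(z)\,y^{i+1},
\]
and after realigning the blocks using $y=tz$ the mismatches reduce to the single one coming from $H(y,\theta(y))=1$. Together with the already-known fact $xR_\theta y$ from Theorem~\ref{mainstrucrure}, this gives $yR_\theta x$.

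The main obstacle is the block-comparison step: the blocks $t$, $z$, $\theta(z)$ in $yx$ and in $\theta(x)y$ are generally not aligned, since $|t|$ and $|z|$ are arbitrary. We first try to align on whole copies of $y$ and $\theta(y)$, which have equal length, and bound $H$ from below by the number of aligned $(y,\theta(y))$ pairs. Where the residual pieces are misaligned, we use Theorem~\ref{conalmc5++} (conjugates are never at distance $1$) and Fine and Wilf (Theorem~\ref{FineandWilf}) to show that an exact match would contradict the primitivity of $y$.
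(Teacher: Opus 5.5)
Your strategy matches the paper's: substitute the three shapes of Theorem~\ref{mainstrucrure} into $H(yx,\theta(x)y)=1$ and count mismatches block by block. Shape (a) is essentially right. After rewriting $z(tz)^j=(zt)^jz$ one gets exactly $H(yx,\theta(x)y)=H(y,\theta(y))+|i-j|$, and $y\notin\textup{P}_\theta$ then gives item 1.

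The other two shapes contain real errors. In shape (b) you reversed $\theta(x)$. Since $\theta$ is antimorphic, $\theta(\theta(y)^i\theta(z))=z\,y^i$, not $y^iz$. With the correct value, $yx=tz(\theta(z)t)^i\theta(z)$ and $\theta(x)y=zt(zt)^iz$ align block by block, and
\[
H(yx,\theta(x)y)=H(tz,zt)+i+1 .
\]
This forces $i=0$ and $tz=zt$, hence $t=\lambda$. No ``lengths and prefixes'' argument is needed.

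The more substantive problem is shape (c): your claim that it forces $j=i+1$ is false. Write $yx=y\,\theta(y)^i\,t\,y^j$ and $\theta(x)y=\theta(y)^j\,t\,y^{i+1}$.
\begin{itemize}
\item For $j\ge 1$ you do get $H=H(y,\theta(y))+|j-i-1|$, which gives item 3.
\item For $j=0$, the word $\theta(x)y=ty^{i+1}$ begins with $t$, not with $\theta(y)$. The first block is then the conjugate pair $(zt,tz)$, and the count is $H(zt,tz)+i$.
\item Theorem~\ref{conalmc5++} rules out $i=0$. But $i=1$ with $zt=tz$, i.e.\ $t=\lambda$, survives. This gives $x=\theta(y)$, $y=z$ (e.g.\ $x=b$, $y=a$ with $\theta(a)=b$), which satisfies both relations and is item 2, not item 3.
\end{itemize}
This is exactly the paper's sub-case $i>j=0$ of its Case 3. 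The theorem itself is unaffected, since this pair is listed under item 2. Still, your shape (c) step as written asserts something false, and it must be replaced by the three-way split above.

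Finally, the ``main obstacle'' you describe does not arise. After rewriting $z(tz)^j=(zt)^jz$, $t(zt)^j=(tz)^jt$ and their $\theta$-images, every comparison is between equal-length blocks, so Fine--Wilf and primitivity play no role. What is actually used is:
\begin{itemize}
\item $y\notin\textup{P}_\theta$;
\item Theorem~\ref{conalmc5++} for the conjugate pair $(zt,tz)$;
\item the fact that $tz=zt$ with $t\in\textup{P}_\theta$ and $H(z,\theta(z))=1$ forces $t=\lambda$ (by Lemma~\ref{44lkc6+}).
\end{itemize}
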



\begin{proof}
 For $x, y \in \Sigma^+$, let $x$ nearly $\theta$-commutes with $y$ and $y$ nearly $\theta$-commutes with $x$. 
 By Theorem \ref{mainstrucrure}, $x$ nearly $\theta$-commutes with $y$ if and only if one of the following hold.
         \begin{enumerate}
             \item $x=(\theta(z)t)^i (zt)^j z$ and $y=tz$,
             \item $x=(zt)^i z$ and $ y= t\theta(z)$,
             \item $x=(t\theta(z))^i (tz)^j t$ and $y=zt$
         \end{enumerate}
  
where $i, j \geq 0$, $H(z, \theta(z))=1$, $t \in \textup{P}_\theta$. Now, $y$ nearly $\theta$-commutes with $x$ implies $H(yx, \theta(x)y)=1$. 
Then, we have the following cases:
\begin{enumerate}

\item If $x=(\theta(z)t)^i (zt)^j z$ and $y=tz$ then, 
        $yx=tz (\theta(z)t)^i (zt)^j z$
        and,
         \[\theta(x)y = \theta(z) (t \theta(z))^j (tz)^i tz 
         = \theta(z) t ( \theta(z) t)^j (zt)^i z\]
        Comparing $i$ and $j$, we have the following cases:
        \begin{enumerate}
        \item If $i=j$, then  $H(yx, \theta(x)y)=1$ implies $H(tz, \theta(z)t)=1$, i.e., $H(y, \theta(y))=1$. 
        Thus, $x=(\theta(z)t)^i (zt)^i z$ and $y=tz$ with $H(y, \theta(y))=1$ (proving case 1 of the statement).

        \item If  $i>j$, then  $i=j+k$ for some $k >0$. 
        This implies
        $yx=tz (\theta(z)t)^j (\theta(z)t)^k (zt)^j z$ and $\theta(x)y=\theta(z) t ( \theta(z) t)^j (zt)^k (zt)^j z$. 
        Since $k>0$, $H(z, \theta(z))=1$, and $H(yx, \theta(x)y)=1$, we have $k=1$ and $tz=\theta(z)t$, which implies that $y =tz \in \textup{P}_\theta$,  which is a contradiction by Lemma \ref{vbcxdfer2101}. 
        
        \item If  $i < j$, then $j=i+l$ for some $l>0$. This implies $yx=tz (\theta(z)t)^i (zt)^l (zt)^i z$ and $\theta(x)y=\theta(z) t ( \theta(z) t)^i ( \theta(z) t)^l  (zt)^i z$.  
        Since $l>0$, $H(z, \theta(z))=1$, and $H(yx, \theta(x)y)=1$, we have $l=1$ and $tz=\theta(z)t$, which implies that $y =tz \in \textup{P}_\theta$, which is a contradiction by Lemma \ref{vbcxdfer2101}.
        \end{enumerate}
    
\item If  $x=(zt)^i z$ and $ y= t\theta(z)$ then, 
        $yx= t \theta(z) (zt)^i z $ and 
        $\theta(x)y = \theta(z) (t \theta(z))^i t \theta(z) = \theta(z) t ( \theta(z) t)^i  \theta(z).$
         Now, $$H(yx, \theta(x)y) = H(t \theta(z), \theta(z) t) + H((zt)^i,  ( \theta(z) t)^i) + H(z, \theta(z)).$$
         Since $H(z, \theta(z))=1$ and $H(yx, \theta(x)y)=1$, we have, $ H(t \theta(z), \theta(z) t)=0$ and $H((zt)^i,  ( \theta(z) t)^i)=0$. 
         Now, $H(t \theta(z), \theta(z) t)=0$ implies $t\theta(z) = \theta(z) t$. If $t \in \Sigma^+$, then by Lemma \ref{44lkc6+}, $\theta(z)$ is a $\theta$-palindrome, which is a contradiction. Thus, $t=\lambda$.
         Since $H(z, \theta(z))=1$, $H((zt)^i,  ( \theta(z) t)^i)=0$ implies $i=0$.
         Thus, $x=z$ and $y=\theta(z)$. Here note that $H(y, \theta(y))=1$ (proving case 2 of the statement).

 \item If  $x=(t\theta(z))^i (tz)^j t$ and $y=zt$, then $yx = z t                       (t\theta(z))^i (tz)^j t$ and $\theta(x)y = t (\theta(z)t)^j (zt)^i zt = t (\theta(z)t)^j z (tz)^i t$
         and 
        $$H(yx, \theta(x)y) =H(z t (t\theta(z))^i (tz)^j t, t (\theta(z)t)^j z (tz)^i t). $$

        Based on values of $i$ and $j$, we now have the following cases: 
         
        \begin{enumerate}
        
                \item If $i=j$  then,  
             \begin{align*}
                 H(yx, \theta(x)y)&=H(z t(t\theta(z) )^{i}, t (\theta(z)t)^iz)\\
                 &=H(z tt(\theta(z)t)^{i-1}, t\theta(z)t (\theta(z)t)^{i-1}) +H(\theta(z), z)
             \end{align*}
             If $i=0$ then, $H(yx, \theta(x)y) = H(zt, tz)$. Since $tz \in C(zt)$,
            by Theorem \ref{conalmc5++} we have $H(yx, \theta(x)y) \neq 1$, which is a contradiction. If $i > 0$ then, 
            since $H(yx,\theta(x)y) =1$ and $ H(\theta(z), z)=1$, we have $z t = t\theta(z)$, which implies that $y=zt \in \textup{P}_\theta$, which is a contradiction by Lemma \ref{vbcxdfer2101}.

            \item   If $i>j$,  then  $i= j+m_1$ for some $m_1 >0$. We now have two cases                based on values of $j$. 
\begin{itemize}
    \item If $j = 0$, then $i=m_1$,  $yx = z t (t\theta(z))^i  t$ and $\theta(x)y = t z (tz)^i t$.
              Since $i>0$, $H(yx, \theta(x)y)=1$ and $ H(z, \theta(z))=1$, we have $i=1$ and $tz=zt$. Since $t \in \textup{P}_\theta$ and $z \notin \textup{P}_\theta$, $tz=zt$ implies $t = \lambda$. Thus, 
                $x=\theta(z)$ and $y=z$. Here note that $H(y, \theta(y))=1$ (proving case 2 of the statement).

                \item If $j>0$,  then
                $yx = z t (t\theta(z))^j (t\theta(z))^{m_1} (tz)^j t$  and $\theta(x)y = t (\theta(z)t)^j z( t z)^{m_1} (tz)^j t$.
                This implies 
                \begin{align*}
                    H(yx, \theta(x)y) &  = H(z t (t\theta(z))^j, t (\theta(z)t)^j z) + H((t\theta(z))^{m_1} (tz)^j t, ( t z)^{m_1} (tz)^j t)\\
                    & = H(z t (t\theta(z))^j, t (\theta(z)t)^j z) + m_1
                \end{align*}
                Since $m_1>0$ and $H(yx, \theta(x)y)=1$, we have $m_1=1$ and $z t (t\theta(z))^j = t (\theta(z)t)^j z$. 
                Then as $j>0$, we have $\theta(z)=z$, which is a contradiction.
\end{itemize}

 \item  If $i< j$ then $j=i+m$ for some $m>0$. We now have the following cases based on values of $i$. 

        \begin{itemize}
        \item If $i=0$, then $j=m>0$, $yx = z t t ( z t)^{j-1} zt $, $\theta(x)y = t (\theta(z)t)^j z t =  t \theta(z) t (\theta(z)t)^{j-1} z t$. Then $H(yx, \theta(x)y) =1$ implies 
                \begin{align}
                &H(zt, t\theta(z)) + H((zt)^{j-1}, (\theta(z)t)^{j-1}) = 1 \nonumber \\
                \implies &  H(zt, t\theta(z)) + j-1 = 1 \nonumber \\
                \implies & j = 2-  H(zt, t\theta(z)) \label{3rd3}
                \end{align}
                If $H(zt, t\theta(z))=0$, i.e., $zt = t\theta(z)$, then $zt \in \textup{P}_\theta$, which is a contradiction by Lemma \ref{vbcxdfer2101}. Thus, $H(zt, t\theta(z)) \geq 1$.
                Since $j > 0$, $H(zt, t\theta(z))\leq 1$ which implies that, $H(zt, t\theta(z))=1$ and from Equation (\ref{3rd3}) we have,  $j=1$. Hence, $x= tz t$ and $y=zt$ with $H(y, \theta(y))=1$ (proving a particular case of case 3 of the statement).

               \item  If $i>0$,            
                then $yx = z t (t\theta(z))^i (tz)^{m-1} (tz)^{i+1} t$ and $$\theta(x)y = t (\theta(z)t)^i \theta(z)( t \theta(z))^{m-1}    (tz)^{i+1} t.$$
                Then, $H(yx, \theta(x)y)=1$ implies that
                \begin{equation}\label{3rd4th}
                     H(zt(t\theta(z))^{i}, t (\theta(z)t)^i \theta(z)) + H((tz)^{m-1},( t \theta(z))^{m-1} )=1.
                \end{equation}
                If $m=1$, then Equation (\ref{3rd4th}) implies 
                \begin{align*}
                     &H(zt(t\theta(z))^{i}, t (\theta(z)t)^i \theta(z)) =1 \\
                    \implies & H(zt(t\theta(z))^{i-1} t \theta(z), t (\theta(z)t)^i \theta(z)) =1\\
                    \implies & H(zt(t\theta(z))^{i-1} t , t (\theta(z)t)^{i-1} \theta(z)t) =1\\
                    \implies & H(zt t (\theta(z)t)^{i-1} , t \theta(z)t (\theta(z)t)^{i-1}) =1\\
                    \implies & H(zt t  , t \theta(z)t) =1\\
                    \implies & H(zt  , t \theta(z)) =1\\
                    \implies & H(y, \theta(y))=1.
                \end{align*}
                Thus,  $x=(t\theta(z))^i (tz)^{i+1} t$ and $y=zt$ with $H(y, \theta(y))=1$ (proving case 3 of the statement). 
        
                If $m \geq 2$, then Equation (\ref{3rd4th}) implies that  
                \begin{align*}
                     &H(zt(t\theta(z))^{i}, t (\theta(z)t)^i \theta(z)) + (m-1) H(tz, t \theta(z) )=1\\
                     \implies & H(zt(t\theta(z))^{i}, t (\theta(z)t)^i \theta(z)) + (m-1)H(z,\theta(z))=1
                     \end{align*}
                     Since $H(yx, \theta(x)y)=H(z,\theta(z)) =1$ and $m \geq 2$, we have,
                     $$H(zt(t\theta(z))^{i}, t (\theta(z)t)^i \theta(z)) =0,$$
                i.e., $zt(t\theta(z))^{i} = t (\theta(z)t)^i \theta(z)$. This implies $zt  = t \theta(z) $, i.e., $y=zt \in \textup{P}_\theta$, which is a contradiction by Lemma \ref{vbcxdfer2101}.
                \end{itemize}
                \end{enumerate}

\end{enumerate}

Converse can be easily verified. 
\end{proof}


The next example shows that the relation $R_\theta$ is not transitive. 
\begin{example}
   Let $\theta$ be such that $\theta(a)=b$.
    Consider $u=abaab$, $v=aab$, and $w= baaab$. Since $H(uv, \theta(v)u) = H(vw, \theta(w)v)=1$, $u R_\theta v$ and $v R_\theta w$ both holds. But as $H(uw, \theta(w)u)>1$, $u R_\theta w$ does not hold.
\end{example}

We now impose the following.
$$\textup{Pr}_\theta(w) =\{z\in \Sigma^+~:~ w= zx \text{ for some } x \in \Sigma^*, ~H(z,\theta(z)) =1\}$$
$$\textup{Sf}_\theta(w) =\{z\in \Sigma^+~:~ w= xz \text{ for some } x \in \Sigma^*, ~H(z,\theta(z)) =1\}.$$
 We now impose some conditions on $u, v$ and show that $u R_\theta v$ and $v R_\theta w$ implies $u R_\theta w$.
\begin{proposition}
    Let $u,v, w \in \Sigma^+$ such that $|u| < |v|$,  $|\textup{Sf}_\theta(v)|\le 1$ and $|\textup{Pr}_\theta(v)|\le 1$.
    If $u$ nearly $\theta$-commutes with $v$ and $v$ nearly $\theta$-commutes with $w$, then $u$ nearly $\theta$-commutes with $w$.
\end{proposition}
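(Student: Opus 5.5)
By Theorem \ref{mainstrucrure}, $u$ nearly $\theta$-commutes with $v$ forces $u$ to be one of the words $\theta(v)^i z v^j$, $\theta(v)^i\theta(z)$ or $\theta(v)^i t v^j$, with $v=tz$ in the first two cases and $v=zt$ in the third. The plan is to pin down these forms first, using $|u|<|v|$. Any nonzero $i$ or $j$ would make $|u|\ge |v|$, so $i=j=0$. This leaves three possibilities:
\[
u=z,\ v=tz;\qquad u=\theta(z),\ v=tz;\qquad u=t,\ v=zt,
\]
where $t\in \textup{P}_\theta$ and $H(z,\theta(z))=1$. In the first two cases $t\neq\lambda$ because $|u|<|v|$. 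In the third case $t\neq\lambda$ because $u\in\Sigma^+$.

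Next I read off the distinguished prefix or suffix of $v$. In the first two cases $z\in\textup{Sf}_\theta(v)$. In the third case $z\in\textup{Pr}_\theta(v)$. Since $|\textup{Sf}_\theta(v)|\le 1$ and $|\textup{Pr}_\theta(v)|\le 1$, this $z$ is the \emph{unique} element of the relevant set. This is how the hypotheses will be used: any other decomposition of $v$ that produces a suffix (resp. prefix) $z'$ with $H(z',\theta(z'))=1$ must have $z'=z$.

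Then I apply Theorem \ref{mainstrucrure} to $v$ nearly $\theta$-commuting with $w$. This writes $w=t'z'$ or $w=z't'$, with $t'\in\textup{P}_\theta$ and $H(z',\theta(z'))=1$, and writes $v$ in one of three forms:
\[
(\mathrm{a})\ v=\theta(w)^{k}z'w^{l},\qquad (\mathrm{b})\ v=\theta(w)^{k}\theta(z'),\qquad (\mathrm{c})\ v=\theta(w)^{k}t'w^{l}.
\]
In (a) the word $v$ ends with $z'$, and in (b) it ends with $\theta(z')$; both lie in $\textup{Sf}_\theta(v)$. So whenever $u$ came from $v=tz$, uniqueness gives $z=z'$ or $z=\theta(z')$. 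The goal is then to show $t=t'$ by length comparison and cancellation. After that, $u\in\{z,\theta(z)\}$ with $w=tz'$ is exactly case 1 or case 2 of Theorem \ref{mainstrucrure} with $i=j=0$, so $u\,R_\theta\, w$.

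Symmetrically, when $v=zt$ and $u=t$, I look for a prefix of $v$ in $\textup{Pr}_\theta(v)$ coming from $w$'s decomposition. In (c) with $k=0$, the word $v=t'w^l$ does not obviously begin with such a prefix. Here I would use $\theta(w)=\theta(t')\theta(z')=t'\theta(z')$ and $w=z't'$, together with the primitivity and non-$\theta$-palindromicity of $v$ and $w$ (Lemma \ref{vbcxdfer2101}, Corollary \ref{wsxcvbgfdr}), to rule out the mismatched combinations. These are $v=tz$ paired with (c), and $v=zt$ paired with (a) or (b). In the surviving combinations I aim to show $u=t=t'$ with $w=z't'$, which is case 3 of the statement with $i=j=0$.

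I expect the main obstacle to be this cross-matching. The decomposition of $v$ coming from $u$ (a short $u$, so $v=tz$ or $zt$ with $|t|=|u|$ or $|z|=|u|$) must be reconciled with the possibly long decomposition $v=\theta(w)^k\cdots w^l$ coming from $w$. My first attempt will be to compare lengths, forcing $k$ and $l$ to be small; this uses $|u|<|v|$ and the fact that $z$ is a suffix or prefix of $v$. If that fails, I will equate the two expressions for $v$ and apply Lemma \ref{44lkc6+} or Theorem \ref{FineandWilf} to get a contradiction with primitivity whenever $k+l\ge 1$ and the pieces do not align.
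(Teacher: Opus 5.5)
Your overall plan is the paper's: apply Theorem~\ref{mainstrucrure} to both relations, use $|u|<|v|$ to get $u\in\{z,\theta(z),t\}$ with $t\neq\lambda$, and use uniqueness in $\textup{Sf}_\theta(v)$, $\textup{Pr}_\theta(v)$ to match the two decompositions of $v$. However, two of the concrete steps you announce would fail.

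The first failure is harmless. In the branch $v=tz$ with (a) or (b), uniqueness gives $z\in\{z',\theta(z')\}$. Hence $t=(\theta(z')t')^k(z't')^l$ with $k+l\ge 1$, resp.\ $t=(\theta(z')t')^k$ with $k\ge 1$. So $|t|>|t'|$, and ``$t=t'$'' is never true. You do not need it: $u\in\{z',\theta(z')\}$ together with $w=t'z'$ (not $tz'$) is already case 1 or 2 of Theorem~\ref{mainstrucrure} with $i=j=0$. This is actually quicker than the paper's Case 1.1, which first derives $t_1=\lambda$.

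The genuine gap is the degenerate case $t'=\lambda$, i.e.\ $w=z'$ with $H(w,\theta(w))=1$, where forms (a) and (c) overlap. In the branch $v=zt$, $u=t$ it yields $v=z^{m+1}\theta(z)^m$, $u=z^m\theta(z)^m$, $w=\theta(z)$ with $m\ge 1$; e.g.\ $v=aab$, $u=ab$, $w=b$ for $\theta(a)=b$. This triple really satisfies $|u|<|v|$, $u R_\theta v$ and $v R_\theta w$, and $v,w$ are primitive and not $\theta$-palindromes. So no argument based only on primitivity and non-$\theta$-palindromicity can exclude ``$v=zt$ with (a)''. Since $u=t\neq t'=\lambda$, your target ``$u=t=t'$, case 3 with $i=j=0$'' also fails for ``$v=zt$ with (c)''.

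Here $u R_\theta w$ does hold, but since $|u|>|w|$ it holds only with nonzero exponents in Theorem~\ref{mainstrucrure}, e.g.\ $u=\theta(w)^m\,w\,w^{m-1}$. The paper treats exactly this situation separately (Cases 3.1 and 3.3 with $t_1=\lambda$) and checks $H(uw,\theta(w)u)=1$ directly. Alternatively, you can exclude it with the hypothesis, used in a way your plan does not anticipate. Here $H(v,\theta(v))=1$, so $v$ itself lies in $\textup{Pr}_\theta(v)$ besides $z\neq v$, contradicting $|\textup{Pr}_\theta(v)|\le 1$. The same overlap makes ``$v=tz$ with (c)'' impossible to exclude when $t'=\lambda$, but there it is harmless: read $w$ as $\lambda z'$ and use your (a)/(b) argument.

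For $t'\neq\lambda$, note that (c) does supply an element of $\textup{Pr}_\theta(v)$ even when $k=0$. Indeed, $v$ begins with $t'\theta(z')t'$ if $k\ge 1$ and with $t'z't'$ if $k=0<l$. Uniqueness identifies $z$ with this prefix. Then $\theta$-palindromicity of the remaining $t$ forces one of two outcomes. Either $t'\theta(z')=\theta(z')t'$, which is impossible by Lemma~\ref{44lkc6+} since it would make $z'$ a $\theta$-palindrome. Or $t'z'\in\textup{P}_\theta$ or $z't'\in\textup{P}_\theta$, which is impossible by Lemma~\ref{vbcxdfer2101}. Carrying this out, together with the analogous computations for (a) and (b), shows that nothing in the branch $v=zt$ survives under the hypotheses. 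So ``$u=t=t'$'' is never actually reached.
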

\begin{proof}
    Let $u$ nearly $\theta$-commute with $v$ and $v$ nearly $\theta$-commute with $w$. Then as $0<|u| < |v|$, by Theorem \ref{mainstrucrure}, one of the following cases hold:
    \begin{enumerate}
        \item $u= z$ and $v=tz$.
        \item $u=z$ and $v=t \theta(z)$.
        \item $u=t$ and $v=zt$  
   \end{enumerate}
    for some $t \in \Sigma^+ \cap \textup{P}_\theta$ and $z \in \Sigma^+$ with $H(z, \theta(z))=1$.

    \begin{itemize}
        \item \textit{Case 1}: Let  $u= z$ and $v=tz$. Since $v$ nearly $\theta$-commutes with $w$, by Theorem \ref{mainstrucrure} one of the following hold true.
        \begin{enumerate}
            \item $v = (\theta(z_1)t_1)^i (z_1t_1)^j z_1$ and $w=t_1 z_1$
            \item $v=(z_1t_1)^i z_1$ and $w= t_1 \theta(z_1)$ \item $v=(t_1\theta(z_1))^i (t_1z_1)^j t_1$ and $w=z_1t_1$
        \end{enumerate} 
        where $i, j \geq 0$, $H(z_1, \theta(z_1))=1$, $t_1 \in \textup{P}_\theta$.
        
        \begin{itemize}
            \item \textit{Case 1.1}: If $v=tz = (\theta(z_1)t_1)^i (z_1t_1)^j z_1$ and $w=t_1 z_1$, with $z, z_1 \in \textup{Sf}_\theta(v)$, then as $|\textup{Sf}_\theta(v)|\leq 1$,
             we have $z = z_1$ which implies that $t=(\theta(z)t_1)^i (zt_1)^j$. 
             Since $t \in \Sigma^+$, $i+j>0$.
            As $H(z,\theta(z)) =1$ and $t,t_1\in \textup{P}_\theta$, we have by Lemma \ref{vbcxdfer2101}, $t_1z, zt_1, t_1\theta(z), \theta(z)t_1   \notin \textup{P}_\theta$.  
            Note that $t, t_1\in \textup{P}_\theta$ and hence, $t = (\theta(z)t_1)^i (zt_1)^j = (t_1\theta(z))^j (t_1z)^i$ implies that 
            either $\theta(z)t_1 = t_1\theta(z) $ 
            or $\theta(z)t_1 = t_1z$ 
            or $zt_1 =t_1\theta(z)$. 
            Now, $\theta(z)t_1 = t_1z$ or  $zt_1 =t_1\theta(z)$ are not possible as they imply that $\theta(z)t_1 \in \textup{P}_\theta$ or $zt_1 \in \textup{P}_\theta$. 
            If $\theta(z)t_1 = t_1\theta(z)$, and $t_1 \in \Sigma^+$, then by Lemma \ref{44lkc6+}, $t_1$ and $z$ are powers of a common word which implies that $z \in \textup{P}_\theta$, a contradiction. Hence, $t_1=\lambda$ and  
            $w=z$.
            This implies as $u=z$, $$H(uw,\theta(w)u) = H(zz,\theta(z)z) = H(z,\theta(z))=1.$$
           Thus, $u$ nearly $\theta$-commutes with $w$.

            \item \textit{Case 1.2}: If  $v=tz=(z_1t_1)^i z_1$ and $w= t_1 \theta(z_1)$, then as $z, z_1 \in \textup{Sf}_\theta(v)$ and $|\textup{Sf}_\theta(v)|\leq 1$, we have $z =z_1$ and  $t= (z t_1)^i$. 
            Since $t \in \Sigma^+$, $i >0$. Now, $t, t_1 \in \textup{P}_\theta$ implies $t= (z t_1)^i = (t_1 \theta(z))^i$. This impies $zt_1 = t_1 \theta(z)$, i.e., $zt_1 \in \textup{P}_\theta$, which is a contradiction by Lemma \ref{vbcxdfer2101}.
            Thus,  $v=tz=(z_1t_1)^i z_1$ is not possible.

            \item \textit{Case 1.3}:  Let $v=tz=(t_1\theta(z_1))^i (t_1z_1)^j t_1$ and $w=z_1t_1$. 
            
            If $t_1 \in \Sigma^+$, then as $z \in \textup{Sf}_\theta(v)$ and $|\textup{Sf}_\theta(v)|\leq 1$,
             either $z=t_1z_1t_1$  and $t=(t_1 \theta(z_1))^i (t_1z_1)^{j-1}$
            or $z=t_1 \theta(z_1) t_1$ and $t=(t_1 \theta(z_1))^{i-1}$. Then similar to \textit{Case 1.1}, we have a contradiction for $t=(t_1 \theta(z_1))^i (t_1z_1)^{j-1}$ and $t=(t_1 \theta(z_1))^{i-1}$.

            If $t_1 =\lambda$, then $z=z_1$ and $t= \theta(z_1)^{i'} z_1^{i'} $ for some $i' \geq 1$. Then, $w=z_1t_1 = z $ and 
            $$H(uw,\theta(w)u) = H(zz,\theta(z)z) = H(z,\theta(z))=1.$$
           Thus, $u$ nearly $\theta$-commutes with $w$.
            
        \end{itemize}

\item \textit{Case 2}: The case when $u=z$ and $v=t \theta(z)$ is very similar to Case 1 and we omit the proof.

          \item  \textit{Case 3}: Let $u=t$ and $v=zt$.
        Since $v$ nearly $\theta$-commutes with $w$, by Theorem \ref{mainstrucrure} one of the following holds true: 
        \begin{enumerate}
            \item $v = (\theta(z_1)t_1)^i (z_1t_1)^j z_1$ and $w=t_1 z_1$  
            \item $v=(z_1t_1)^i z_1$ and $w= t_1 \theta(z_1)$ 
            \item $v=(t_1\theta(z_1))^i (t_1z_1)^j t_1$ and $w=z_1t_1$
        \end{enumerate} for some $i, j \geq 0$, $H(z_1, \theta(z_1))=1$, $t_1 \in \textup{P}_\theta$.
        
        \begin{itemize}
            \item \textit{Case 3.1}: If $v=zt = (\theta(z_1)t_1)^i (z_1t_1)^j z_1$ and $w=t_1 z_1$, then
            as $z \in \textup{Pr}_\theta(v)$ and $|\textup{Pr}_\theta(v)|\leq 1$, we have either $z = \theta(z_1)$ and  $t= t_1 (\theta(z_1)t_1)^{i-1} (z_1t_1)^jz_1$ or $z = z_1$ and  $t= (t_1 z_1)^{j}$.  
            As $H(z_1,\theta(z_1)) =1$ and $t,t_1\in \textup{P}_\theta$, we have by Lemma \ref{vbcxdfer2101}, $t_1 z_1, z_1 t_1, t_1\theta(z_1), \theta(z_1)t_1 \notin \textup{P}_\theta$. 
Since $t, t_1 \in \textup{P}_\theta$, $$t= t_1 (\theta(z_1)t_1)^{i-1} (z_1t_1)^jz_1 =  (t_1 \theta(z_1))^{i-1} t_1 z_1 (t_1 z_1)^j =(\theta(z_1)t_1)^j \theta(z_1) t_1 (z_1 t_1)^{i-1}.$$  
This implies that either $t_1\theta(z_1) = \theta(z_1)t_1$ or $t_1 \theta(z_1) = z_1t_1$ or $t_1z_1 = \theta(z_1)t_1$.
            Now $t_1 \theta(z_1) = z_1t_1$ or $t_1z_1 = \theta(z_1)t_1$ are not possible as they imply that $t_1 \theta(z_1) \in \textup{P}_\theta$ or $t_1 z_1 \in \textup{P}_\theta$.
             If $t_1 \theta(z_1) = \theta(z_1) t_1$, and $t_1 \in \Sigma^+$, then by Lemma \ref{44lkc6+}, $t_1$ and $\theta(z_1)$ are powers of a common word which implies that $z_1 \in \textup{P}_\theta$, a contradiction. Thus,  $t_1=\lambda$.
             This implies $t=\theta(z_1)^{j'} z_1^{j'}$ for some $j'\geq 1$ and $w=z_1=\theta(z)$.
             Then, $u= t = \theta(z_1)^{j'} z_1^{j'} = z^{j'} \theta(z)^{j'}$. Thus, $$H(uw, \theta(w)u)=H(z^{j'} \theta(z)^{j'} \theta(z), z z^{j'} \theta(z)^{j'} )=H(z^{j'} \theta(z) \theta(z)^{j'},  z^{j'} z \theta(z)^{j'} )=1.$$
             Therefore, $u$ nearly $\theta$-commutes with $w$.

             Now, as $t, t_1 \in \textup{P}_\theta$, $t= (t_1 z_1)^{j} = (\theta(z_1)t_1)^j$ which implies $t_1 z_1 = \theta(z_1)t_1$, which is a contradiction as $t_1 z_1 \notin \textup{P}_\theta$.
            

           \item \textit{Case 3.2}: If  $v= zt = (z_1t_1)^i z_1$ and $w= t_1 \theta(z_1)$, then 
             as $z, z_1 \in \textup{Pr}_\theta(v)$ and $|\textup{Pr}_\theta(v)|\leq 1$, we have $z = z_1$ and  $t=  (t_1 z_1)^{i}$. 
             Now $t=  (t_1 z_1)^{i}$ gives us a contradiction as $t_1 z_1 \notin \textup{P}_\theta$.

 \item \textit{Case 3.3}: Let $v= z t =(t_1\theta(z_1))^i (t_1z_1)^j t_1$ and $w=z_1t_1$. If $t_1 \in \Sigma^+$, then
             as $z \in \textup{Pr}_\theta(v)$ and $|\textup{Pr}_\theta(v)|\leq 1$, we have
            either $z=t_1 \theta(z_1) t_1$ and $t= \theta(z_1) (t_1 \theta(z_1))^{i-2} (t_1z_1)^{j} t_1$
            or   $z=t_1 \theta(z_1) t_1$ and $t= (z_1 t_1)^{j}$      
            or $z=t_1 z_1 t_1$ and $t= (z_1t_1)^{j-1}$.

            Since $t, t_1 \in P_\theta$, $t= \theta(z_1) (t_1 \theta(z_1))^{i-2} (t_1z_1)^{j} t_1 = (\theta(z_1) t_1)^{i-2} \theta(z_1) t_1 (z_1 t_1)^{j} = (t_1 \theta(z_1))^j t_1z_1 (t_1 z_1)^{i-2}$. This implies either $\theta(z_1)t_1 = t_1 \theta(z_1)$ or $\theta(z_1)t_1 = t_1 z_1$ or $z_1 t_1 = t_1 \theta(z_1)$.
             Now $\theta(z_1)t_1=t_1z_1$ or $t_1 \theta(z_1) = z_1t_1$  are not possible as they imply that $t_1 z_1 \in \textup{P}_\theta$ or $t_1 \theta(z_1) \in \textup{P}_\theta$. If $\theta(z_1)t_1 = t_1 \theta(z_1)$, then by Lemma \ref{44lkc6+}, $t_1$ and $\theta(z_1)$ are powers of a common word which implies that $z_1 \in \textup{P}_\theta$, a contradiction.

             Now  $t= (z_1 t_1)^{j}$ or $t= (z_1t_1)^{j-1}$ are not possible as $z_1 t_1 \notin \textup{P}_\theta$.

            
            If $t_1 =\lambda$, then $z = \theta(z_1)$ and $t= \theta(z_1)^{i'} z_1^{i'} $ for some $i'\geq 1$ and $w=z_1 t_1 = \theta(z) $ and $u= (\theta(z_1))^{i'} z_1^{i'} = z^{i'} \theta(z)^{i'} $.
            Thus, $$H(uw, \theta(w)u)=H(z^{i'} \theta(z)^{i'} \theta(z), z z^{i'} \theta(z)^{i'} )=H(z^{i'} \theta(z) \theta(z)^{i'},  z^{i'} z \theta(z)^{i'} )=1.$$
             Therefore, $u$ nearly $\theta$-commutes with $w$.
              \end{itemize}
    \end{itemize}
\end{proof}

\section{Properties of the set of all $x$ such that $x$ nearly $\theta$-commutes with a given $y$}\label{Property}

In this section, for a given non-empty word $y\in \Sigma^+$, we define a set denoted by $L_\theta(y)$, which consists of all words that nearly $\theta$-commutes with the given word $y$ and discuss few combinatorial properties of $L_\theta(y)$.
We first show that $L_\theta(y)$ is regular (Theorem \ref{thcf}). We then characterize words $y_1$ and $y_2$ such that equality of $L_\theta(y_1)$ and $L_\theta(y_2)$ holds (Theorem \ref{theq}).

For a given $y \in \Sigma^+$, we define, $$L_\theta(y) = \{ x : H(xy, \theta(y)x) = 1\}.$$

\begin{example}\label{exmplea-1}
       Let $y=baba a$ where $\theta(a)=b$. 
    Then $y$ can not be written in the form $zt$ but it can be written in the form $tz$ where $t \in \textup{P}_\theta$ and $H(z, \theta(z))=1$. 
    In fact, $y$ has exactly two distinct representations in the form $tz$, i.e., $y=t_1z_1 = t_2z_2$ where $t_1=baba$, $z_1=a$, $t_2=ba$ and $z_2=baa$ with $t_1, t_2 \in \textup{P}_\theta$ and $H(z_1, \theta(z_1))=H(z_2, \theta(z_2))=1$.
    Then, $L_\theta(y) = \{ (b baba)^{i} (a baba)^{j} a: i, j \geq 0 \} \cup \{ (b baba)^{i} b: i\geq 0  \} \cup \{ (bba ba)^{i} (baa ba)^{j} baa : i, j \geq 0  \} \cup \{ (bba ba)^{i} bba: i\geq 0    \}$.
\end{example}


We first show that for any given non-empty word $y$, the set of all words that nearly $\theta$-commutes with $y$ is a regular language. 

\begin{theorem}\label{thcf}
    For a given $y \in \Sigma^+$, $L_\theta(y)$ is a regular language.
\end{theorem}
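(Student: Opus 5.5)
The plan is to read off $L_\theta(y)$ explicitly from Theorem \ref{mainstrucrure} and show that it is a finite union of regular languages. For a fixed $y \in \Sigma^+$, Theorem \ref{mainstrucrure} gives $x \in L_\theta(y)$ if and only if one of three conditions holds. The first is $x=\theta(y)^i z y^j$ for some factorization $y=tz$. The second is $x=\theta(y)^i\theta(z)$ for some factorization $y=tz$. The third is $x=\theta(y)^i t y^j$ for some factorization $y=zt$. In every case $i,j\ge 0$, $t\in \textup{P}_\theta$ and $H(z,\theta(z))=1$.

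First I will define the set of admissible factorizations
$$F_1(y)=\{(t,z): y=tz,\ t\in \textup{P}_\theta,\ z\in\Sigma^+,\ H(z,\theta(z))=1\},$$
and similarly $F_2(y)$ for factorizations $y=zt$. Since $y$ has only $|y|+1$ factorizations into two words, both $F_1(y)$ and $F_2(y)$ are finite; either may be empty. This gives
$$L_\theta(y)=\bigcup_{(t,z)\in F_1(y)}\Big(\theta(y)^*\,z\,y^* \cup \theta(y)^*\theta(z)\Big)\ \cup \bigcup_{(z,t)\in F_2(y)} \theta(y)^*\,t\,y^*.$$
Example \ref{exmplea-1} illustrates exactly this decomposition.

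Second, each piece is regular. Here $\theta(y)$, $y$, $z$, $\theta(z)$ and $t$ are fixed words, so $\theta(y)^*zy^*$ and its analogues are concatenations of Kleene stars of single words with fixed words, hence regular expressions. To stay faithful to the paper's grammar definition, I will give a right-linear grammar explicitly. Writing $\theta(y)=c_1\cdots c_m$ and $y=d_1\cdots d_m$, the language $\theta(y)^* z y^*$ is generated by the productions $S\to \theta(y)S \mid zB$ and $B\to yB\mid \lambda$. Multi-letter terminal strings can be split into single letters with auxiliary nonterminals if desired. The pieces $\theta(y)^*\theta(z)$ and $\theta(y)^* t y^*$ are handled in the same way.

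Third, regular languages are closed under finite unions. Alternatively, I can take a fresh start symbol $S_0\to S_k$ for each piece, which keeps the grammar right-linear. Hence $L_\theta(y)$ is regular. If both $F_1(y)$ and $F_2(y)$ are empty, then $L_\theta(y)=\emptyset$, which is regular; this matches Observation \ref{existenceofxfory}.

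The only point needing care is that Theorem \ref{mainstrucrure} quantifies over $z$ and $t$ existentially. So I must verify that, for fixed $y$, these range over a finite set, which the factorization count above guarantees. I expect no real obstacle; the substance lies entirely in Theorem \ref{mainstrucrure}.
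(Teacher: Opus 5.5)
Your proposal is correct and follows the paper's proof: finitely many factorizations $y=tz$ and $y=zt$, the decomposition of $L_\theta(y)$ read off from Theorem \ref{mainstrucrure}, and a right-linear grammar for it. Your pieces $\theta(y)^*zy^*$, $\theta(y)^*\theta(z)$ and $\theta(y)^*ty^*$ are exactly the paper's $L_1, L_2, L_3$ once you expand $\theta(y)=\theta(z)t$ (resp.\ $t\theta(z)$); the only cosmetic difference is that the paper writes a single grammar with nonterminals $A_i,B_i,C_j,D_j$, whereas you write one grammar per piece and join them with a fresh start symbol.
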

\begin{proof}
If $y$ can not be expressed in the form $tz$ or $zt$ where $H(z, \theta(z))=1$ and $t \in \textup{P}_\theta$, then by Theorem \ref{mainstrucrure}, $L_\theta(y) = \emptyset$ which is a regular language.

Now suppose that $y$ can be expressed in the form $tz$ or $zt$, where $H(z, \theta(z))=1$ and $t \in \textup{P}_\theta$. 
Since $|y|$ is finite, $y$ admits only finitely many representations of these forms. Assume that there are exactly $k \geq 0$ many distinct representations of $y$ in the form $tz$ and $m \geq 0$ many distinct representations of $y$ in the form $zt$ with $m+k>0$. Then, we can write $y=t_1z_1=t_2z_2 = \cdots = t_k z_k = z_1't_1'= z_2't_2' = \cdots =z_m't_m'$ where $H(z_i, \theta(z_i))=H(z_j', \theta(z_j'))=1$ and $t_i, t_j' \in \textup{P}_\theta$ for all $1 \leq i \leq k$ and $1 \leq j \leq m$.
Then by Theorem \ref{mainstrucrure},
\[L_\theta(y) = L_1 \cup L_2 \cup L_3 \] where 
\[ L_1 = \{ (\theta(z_i)t_i)^{l_1} (z_it_i)^{l_2} z_i ~|~ 1 \leq i \leq k \text{ and } l_1, l_2 \geq 0\}, \] 
\[L_2 = \{ (\theta(z_i)t_i)^{l_1} \theta(z_i) ~|~ 1 \leq i \leq k \text{ and } l_1 \geq 0\}, \] 
\[ L_3 =  \{ (t_j' \theta(z_j'))^{l_1} (t_j'z_j')^{l_2} t_j' ~|~ 1 \leq j \leq m \text{ and } l_1, l_2 \geq 0\}.\]

We now define a regular grammar $G = (N, \Sigma, P, S)$ where $N = \{S, A_i, B_i, C_j, D_j ~|~ 1 \leq i \leq k, 1 \leq j \leq m \}$ is the set of non-terminals, $\Sigma$ is the set of terminals, $S$ is the start symbol and $P$ is the set of derivation rules which contains the following rules:
\begin{itemize}
    \item $ S \rightarrow A_i ~|~ C_j $
    \item $A_i \rightarrow \theta(z_i) t_i A_i ~|~ \theta(z_i) ~|~ B_i $
    \item $B_i \rightarrow z_i t_i B_i ~|~ z_i $
    \item $C_j \rightarrow t_j' \theta(z_j') C_j ~|~ D_j  $
   \item $D_j \rightarrow t_j' z_j' D_j ~|~ t_j' $
\end{itemize}
for all $1 \leq i \leq k$ and $1 \leq j \leq m$.
We now show that $L_\theta(y)=L(G)$ where $L(G)$ is the language generated by the grammar $G$.

Let $w \in L_\theta(y)$. Then,  $w= (\theta(z_i)t_i)^{l_1} (z_it_i)^{l_2} z_i$ or $w= (\theta(z_i)t_i)^{l_1} \theta(z_i)$ or $w=(t_j' \theta(z_j'))^{l_1} (t_j'z_j')^{l_2} t_j'$ for some $1 \leq i \leq k$, $1 \leq j \leq m$ and $l_1, l_2 \geq 0$. 
Since $A_i \rightarrow (\theta(z_i)t_i)^{l_1}$ and $B_i \rightarrow (z_it_i)^{l_2}$, $S \rightarrow (\theta(z_i)t_i)^{l_1} (z_it_i)^{l_2} z_i$, i.e., $(\theta(z_i)t_i)^{l_1} (z_it_i)^{l_2} z_i \in L(G)$.
Similarly, 
$(\theta(z_i)t_i)^{l_1} \theta(z_i) \in L(G)$ and $(t_j' \theta(z_j'))^{l_1} (t_j'z_j')^{l_2} t_j' \in L(G)$. Thus $w \in L(G)$. Therefore,  $L_\theta(y) \subseteq L(G)$.

Let $u \in L(G)$. Then either 
$u=(\theta(z_i)t_i)^{l_1} (z_it_i)^{l_2} z_i$ or $u= (\theta(z_i)t_i)^{l_1} \theta(z_i)$ or $u=(t_j' \theta(z_j'))^{l_1} (t_j'z_j')^{l_2} t_j'$ for some $1 \leq i \leq k$, $1 \leq j \leq m$ and $l_1, l_2 \geq 0$. Then clearly $u \in L_\theta(y)$, i.e., $L(G) \subseteq L_\theta(y)$.

Therefore $L_\theta(y)=L(G)$. This implies  $L_\theta(y)$ is a regular language.

\end{proof}

We now discuss the structures of words $y_1$ and $y_2$ such that  $L_\theta(y_1) = L_\theta(y_2)$.
From Observation \ref{existenceofxfory},  $L_\theta(y_1) = L_\theta(y_2) =\emptyset$ if and only if $y_1$ and  $y_2$ can not be expressed in the form $tz$ or $zt$ where $H(z, \theta(z))=1$ and $t \in \textup{P}_\theta$. We now characterize $y_1$ and  $y_2$ such that  $L_\theta(y_1) = L_\theta(y_2)$, and both $L_\theta(y_1) $ and $ L_\theta(y_2)$ are non-empty.

\begin{theorem}\label{theq}
    For $y_1, y_2 \in \Sigma^*$, let $L_\theta(y_1)$ and $L_\theta(y_2)$ be non-empty. Then
    $L_\theta(y_1) = L_\theta(y_2)$ if and only if $y_1 =y_2$.
\end{theorem}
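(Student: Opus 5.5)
The plan is to reduce the problem to a statement about lengths and periods, using two elementary closure properties of $L_\theta(y)$. The direction $y_1=y_2 \Rightarrow L_\theta(y_1)=L_\theta(y_2)$ is trivial, so only the converse needs work.

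\textbf{Closure step.} For every $y$ and every $x\in L_\theta(y)$, both $xy$ and $\theta(y)x$ lie in $L_\theta(y)$. Indeed,
\[H(xyy,\theta(y)xy)=H(xy,\theta(y)x)=1,\]
and
\[H(\theta(y)xy,\theta(y)\theta(y)x)=H(xy,\theta(y)x)=1.\]
Now let $L=L_\theta(y_1)=L_\theta(y_2)\neq\emptyset$. By Corollary \ref{wsxcvbgfdr}, both $y_1$ and $y_2$ are primitive and nonempty. Fix $x\in L$; replacing $x$ by $xy_1$ if necessary, we may assume $x\neq\lambda$. Since $L$ is closed under right multiplication by $y_1$, we get $xy_1^n\in L=L_\theta(y_2)$ for every $n\ge 1$.

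\textbf{Length step.} Take $n$ large and set
\[A=xy_1^ny_2,\qquad B=\theta(y_2)xy_1^n,\]
so that $H(A,B)=1$. On the positions from $|x|+|y_2|+1$ to $|x|+n|y_1|$, both $A$ and $B$ are factors of $y_1^\omega$. In $A$ this stretch is a factor of $y_1^\omega$ read from offset $|y_2|$. In $B$ it is the prefix of $y_1^n$, read from offset $0$. Two words of common period $|y_1|$ that differ in at most one position over a stretch of length much larger than $|y_1|$ must coincide: a mismatch would repeat every $|y_1|$ positions. Hence the conjugate of $y_1$ obtained by shifting by $|y_2| \bmod |y_1|$ equals $y_1$. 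Since $y_1$ is primitive, Lemma \ref{44lkc6+} forces $|y_1|$ to divide $|y_2|$. Running the same argument with $xy_2^n\in L_\theta(y_1)$ gives that $|y_2|$ divides $|y_1|$, so $|y_1|=|y_2|$.

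\textbf{Comparison step.} With equal lengths, the comparison of $A$ and $B$ splits blockwise:
\[1=H(xy_1^ny_2,\theta(y_2)xy_1^n)=H(xy_1,\theta(y_2)x)+H(y_1,y_2),\]
because the middle blocks $y_1^{n-1}$ match exactly. Suppose $y_1\neq y_2$. Then $xy_1=\theta(y_2)x$ and $H(y_1,y_2)=1$. By symmetry, also $xy_2=\theta(y_1)x$. Applying Lemma \ref{44lkc6+} to $xy_1=\theta(y_2)x$, and using primitivity to force the exponent $i=1$, gives
\[y_1=sr,\qquad \theta(y_2)=rs,\qquad x=(rs)^kr.\]
Hence $y_2=\theta(s)\theta(r)$. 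Substituting into $xy_2=\theta(y_1)x$, that is,
\[(rs)^kr\,\theta(s)\theta(r)=\theta(r)\theta(s)(rs)^kr,\]
and comparing prefixes and suffixes should force $r,s\in \textup{P}_\theta$. Then $y_2=sr=y_1$, a contradiction.

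I expect this last step to be the main obstacle, namely extracting $r,s\in\textup{P}_\theta$ from the two twisted commutation relations. If the direct comparison stalls, I would first try the following. Feed the explicit forms into the condition $x\in L_\theta(y_1)$, i.e. $H(xy_1,\theta(y_1)x)=1$, which becomes $H(\theta(y_2)x,\theta(y_1)x)=1$ and is automatic. So that condition alone is not enough. I would instead use Theorem \ref{mainstrucrure} to write $y_1=tz$ or $zt$ and match it against the factorization $y_1=sr$. As a fallback, I would use the additional member $\theta(y_1)x\in L=L_\theta(y_2)$ and redo the blockwise comparison, which should yield $H(\theta(y_1),\theta(y_2))=0$ directly.
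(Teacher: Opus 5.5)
Your closure step, the periodicity argument giving $|y_1|=|y_2|$, and the blockwise identity giving $xy_1=\theta(y_2)x$, $xy_2=\theta(y_1)x$ and $H(y_1,y_2)=1$ are all correct. The gap is the final claim that these relations force $r,s\in\textup{P}_\theta$. That claim is false when $k=0$ and $s\neq\lambda$, i.e.\ when $|x|<|y_1|$, and your choice of $x$ does not exclude this case. For $k=0$ the equation reads $r\theta(s)\theta(r)=\theta(r)\theta(s)r$, which holds as soon as $r\in\textup{P}_\theta$, with no condition on $s$.

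Concretely, take $\theta(a)=b$, $x=r=ab$, $s=a$, so that $y_1=aab$ and $y_2=bab$. Then $xy_1=abaab=\theta(y_2)x$, $xy_2=abbab=\theta(y_1)x$ and $H(y_1,y_2)=1$. Moreover $x\in L_\theta(y_1)\cap L_\theta(y_2)$, and $xy_1^n\in L_\theta(y_2)$, $xy_2^n\in L_\theta(y_1)$ for all $n$. Yet $y_1\neq y_2$, so nothing you have collected can yield a contradiction.

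Your fallbacks do not close this either. The member $\theta(y_1)x\in L_\theta(y_2)$ splits as $H(\theta(y_1),\theta(y_2))+H(xy_2,\theta(y_1)x)=1$, which gives $H(y_1,y_2)=1$ again, not $0$. And $y_1=sr$ is simply a legitimate $zt$ factorization in Theorem~\ref{mainstrucrure}, with $z=s$ and $t=r$. This configuration, $y_1=z_1t_1$ and $y_2=\theta(z_1)t_1$, is exactly the last subcase of Case 3.3 in the paper, which is settled only by exhibiting a separating word.

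The missing idea is to use closure under both $y_1$ and $y_2$ in the same word. From $x\in L$ you get $xy_1\in L_\theta(y_1)=L$. Since $L=L_\theta(y_2)$, you then get $xy_1y_2\in L=L_\theta(y_1)$. With $|y_1|=|y_2|$ the blocks align, and
\[
H(xy_1y_2y_1,\theta(y_1)xy_1y_2)=H(xy_1,\theta(y_1)x)+H(y_2,y_1)+H(y_1,y_2)=1+2H(y_1,y_2),
\]
so $y_1=y_2$. In the example, $xy_1y_2=abaabbab$ is precisely the paper's separating word $t_1z_1t_1\theta(z_1)t_1$.

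Alternatively, you can keep your Lemma~\ref{44lkc6+} route but choose $x\in L$ with $|x|>|y_1|$, e.g.\ $xy_1^2$. Then $k\ge 1$. Comparing the first $|r|$ letters and then the next $|s|$ letters of $(rs)^kr\theta(s)\theta(r)=\theta(r)\theta(s)(rs)^kr$ gives $r=\theta(r)$ and $s=\theta(s)$, hence $y_2=sr=y_1$.

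With either repair your argument becomes a complete proof. It is considerably shorter than the paper's case analysis through Theorem~\ref{mainstrucrure}, since it needs only the two closure properties, the period count, and primitivity from Corollary~\ref{wsxcvbgfdr}.
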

\begin{proof}
    For $y_1, y_2 \in \Sigma^*$, let $L_\theta(y_1)$ and $L_\theta(y_2)$ be non-empty sets with $L_\theta(y_1) = L_\theta(y_2)$. 
    Without loss of generality, let $|y_1| \leq |y_2|$.
    Since  $L_\theta(y_1)$ is non-empty,  there exists $t_1, z_1 \in \Sigma^*$ where $t_1 \in \textup{P}_\theta$ and $H(z_1, \theta(z_1))=1$ such that  $y_1=t_1z_1$  or  $y_1=z_1t_1$.
    If $y_1=t_1z_1$, then for all $i, j \geq 0$, $(\theta(z_1)t_1)^i (z_1t_1)^j z_1 \in L_\theta(y_1)$ and $(\theta(z_1)t_1)^i \theta(z_1) \in L_\theta(y_1)$.
    If $y_1=z_1 t_1$, then for all $i, j \geq 0$, $(t_1\theta(z_1))^i (t_1z_1)^j t_1 \in L_\theta(y_1)$.
    We now have the following cases:
    \begin{itemize}
        \item \textbf{Case 1}: Let $y_1=t_1z_1$ and  $(\theta(z_1)t_1)^i (z_1t_1)^j z_1 \in L_\theta(y_1)$ for all $i, j \geq 0$.
        
        Then $z_1 t_1 z_1 \in L_\theta(y_1)$, taking $i=0$ and $ j=1$. This implies $z_1 t_1 z_1 \in L_\theta(y_2)$. Then there exist $t_2, z_2 \in \Sigma^*$ with $t_2 \in \textup{P}_\theta$ and $H(z_2, \theta(z_2))=1$  such that one of the following holds:
        \begin{enumerate}
            \item $z_1 t_1 z_1 = (\theta(z_2)t_2)^{i'} (z_2t_2)^{j'} z_2$ for some $i', j' \geq 0$ where $y_2= t_2 z_2$.
            
            \item  $z_1 t_1 z_1 = (z_2t_2)^{i'} z_2$ for some $i' \geq 0$ where $y_2= t_2 \theta(z_2)$.

            \item $z_1 t_1 z_1 = (t_2 \theta(z_2))^{i'} (t_2z_2)^{j'} t_2$ for some $i', j' \geq 0$ where $y_2= z_2 t_2$.
        \end{enumerate}

   Let us discuss the cases one by one.
        \begin{itemize}
            \item \textbf{Case 1.1}  Let, \begin{equation}\label{equty0}
            z_1 t_1 z_1 = (\theta(z_2)t_2)^{i'} (z_2t_2)^{j'} z_2 \text{ for some } i', j' \geq 0.
        \end{equation}
        \begin{itemize}
            \item 
        If $|y_1|<|y_2|$, i.e., $|t_1 z_1| < |t_2 z_2|$, then $|z_1t_1z_1|<2|z_2t_2|$ and thus $i'+j'<2$. Therefore, from Equation (\ref{equty0}), we have either $z_1t_1z_1=z_2$ or $z_1 t_1 z_1 = z_2 t_2 z_2$ or $z_1 t_1 z_1 = \theta(z_2)t_2 z_2$.
        Let  $z_1t_1z_1=z_2$. Then $H(z_2, \theta(z_2))=H(z_1 t_1 z_1, \theta(z_1) t_1 \theta(z_1))=2$ as $H(z_1, \theta(z_1))=1$, which is a contradiction as $H(z_2, \theta(z_2))=1$.
        Let $z_1 t_1 z_1 = z_2 t_2 z_2$.
        Then as $|t_1 z_1| < |t_2 z_2|$, $|z_1|> |z_2|$. This implies $z_1 = z_2 \alpha=\alpha' z_2$ for some $\alpha, \alpha' \in \Sigma^+$. Then by Lemma \ref{44lkc6+}, 
        $\alpha' = (pq)^{l'}$, $z_2 = (pq)^{l}p$ and $\alpha=(qp)^{l'}$ for some $q \in \Sigma^*$, $p \in \Sigma^+$, $l'\geq 1$ and $l \geq 0$.
        Now $z_1 = z_2 \alpha=\alpha' z_2$ and $z_1 t_1 z_1 = z_2 t_2 z_2$ implies $\alpha t_1 \alpha'=t_2$ which implies that $\alpha = \theta(\alpha')$. Then using $\alpha' = (pq)^{l'}$ and $\alpha=(qp)^{l'}$, we have $p, q \in \textup{P}_\theta$ and hence, $z_2 \in \textup{P}_\theta$, a contradiction.
        Similarly, when $z_1 t_1 z_1 = \theta(z_2)t_2 z_2$, we have a contradiction.

        \item If $|y_1|=|y_2|$, then from Equation (\ref{equty0}),  $z_1 t_1 z_1 = z_2 t_2 z_2$ or $z_1 t_1 z_1 = \theta(z_2)t_2 z_2$. Now, $z_1 t_1 z_1 = z_2 t_2 z_2$  implies that $y_1=y_2$ and $z_1 t_1 z_1 = \theta(z_2)t_2 z_2$ implies that $z_1=\theta(z_2) = z_2$ which is a contradiction.
        \end{itemize}

         \item \textbf{Case 1.2} Let,  \begin{equation}\label{equty1}
             z_1 t_1 z_1 = (z_2t_2)^{i'} z_2  \text{ for some } i' \geq 0.
        \end{equation} 
        \begin{itemize}
            \item If $|y_1|<|y_2|$, i.e., $|t_1 z_1| < |t_2 z_2|$, then $|z_1t_1z_1|<2|z_2t_2|$. Then from Equation (\ref{equty1}), we have either $z_1t_1z_1=z_2$ or $z_1 t_1 z_1 = z_2 t_2 z_2$. Similar to Case 1.1, we have a contradiction.
        \item If $|y_1|=|y_2|$, then from Equation (\ref{equty1}),  $z_1 t_1 z_1 = z_2 t_2 z_2$. This gives $z_1=z_2$ and $t_1=t_2$. Then, $y_1=t_1z_1$ and $y_2=t_1 \theta(z_1)$ which implies that, $\theta(z_1)t_1 z_1t_1 z_1 \in L_\theta(y_1) \setminus L_\theta(y_2)$, a contradiction. 
        \end{itemize}

        \item \textbf{Case 1.3}  Let, \begin{equation}\label{equty2}
            z_1 t_1 z_1 = (t_2 \theta(z_2))^{i'} (t_2z_2)^{j'} t_2 \text{ for some } i', j' \geq 0.
        \end{equation}
        \begin{itemize}
            \item 
        If $|y_1| < |y_2|$, then $|z_1t_1z_1|<2|z_2t_2|$. Then from Equation (\ref{equty2}) we have either $z_1t_1z_1=t_2$ or $z_1 t_1 z_1 = t_2 z_2 t_2$ or $z_1 t_1 z_1 = t_2 \theta(z_2) t_2$.
        If $z_1t_1z_1=t_2$, then $\theta(z_1)=z_1$, contradiction.
        Let $z_1 t_1 z_1 = t_2 z_2 t_2$. Then $H(t_2 z_2 t_2, t_2 \theta(z_2) t_2) = H(z_1 t_1 z_1, \theta(z_1) t_1 \theta(z_1))=2$ which is a contradiction as $H(t_2 z_2 t_2, t_2 \theta(z_2) t_2)=1$. Similarly, $z_1 t_1 z_1 = t_2 \theta(z_2) t_2$ is not possible.
        

        \item If $|y_1| = |y_2|$, then from Equation (\ref{equty2}), we have  either $z_1 t_1 z_1 = t_2 z_2 t_2$ or $z_1 t_1 z_1 = t_2 \theta(z_2) t_2$.
        Similar to the above, $z_1 t_1 z_1 = t_2 z_2 t_2$ or $z_1 t_1 z_1 = t_2 \theta(z_2) t_2$ is not possible.
        \end{itemize}

        \end{itemize}

        \item  \textbf{Case 2}: Let , $y_1=t_1z_1$ and $(\theta(z_1)t_1)^i \theta(z_1) \in L_\theta(y_1)$ for all $i \geq 0$. 
        
        Then, $\theta(z_1) t_1 \theta(z_1) \in L_\theta(y_1)$. This implies $\theta(z_1) t_1 \theta(z_1) \in L_\theta(y_2)$.
        Then similar to Case $1$, we show that $y_1=y_2$.

\item  \textbf{Case 3}: Let $y_1=z_1 t_1$ and $(t_1\theta(z_1))^i (t_1z_1)^j t_1 \in L_\theta(y_1)$ for all $i, j \geq 0$.

        Then, $ t_1z_1 t_1 z_1 t_1 \in L_\theta(y_1)$, i.e., $ t_1z_1 t_1 z_1 t_1 \in L_\theta(y_2)$ and there exist $t_2, z_2 \in \Sigma^*$ with $t_2 \in \textup{P}_\theta$ and $H(z_2, \theta(z_2))=1$  such that one of the following holds:
        \begin{enumerate}
            \item $t_1z_1 t_1 z_1 t_1 = (\theta(z_2)t_2)^{i'} (z_2t_2)^{j'} z_2$ for some $i', j' \geq 0$ where $y_2= t_2 z_2$.

            \item  $t_1z_1 t_1 z_1 t_1 = (z_2t_2)^{i'} z_2$ for some $i' \geq 0$ where $y_2= t_2 \theta(z_2)$.

            \item $t_1z_1 t_1 z_1 t_1 = (t_2 \theta(z_2))^{i'} (t_2z_2)^{j'} t_2$ for some $i', j' \geq 0$ where $y_2= z_2 t_2$.
        \end{enumerate}

        \begin{itemize}

         \item \textbf{Case 3.1} Let, $t_1z_1 t_1 z_1 t_1 = (\theta(z_2)t_2)^{i'} (z_2t_2)^{j'} z_2$ for some $i', j' \geq 0$.
         \begin{itemize}
             \item 
        If $|y_1|< |y_2|$, then $t_1z_1 t_1 z_1 t_1 = z_2$ or $t_1z_1 t_1 z_1 t_1 = z_2t_2 z_2$ or 
        $t_1z_1 t_1 z_1 t_1 = \theta(z_2)t_2 z_2$ or $t_1z_1 t_1 z_1 t_1 = \theta(z_2)t_2 z_2t_2 z_2$ or $t_1z_1 t_1 z_1 t_1 = z_2 t_2 z_2t_2 z_2$ or $t_1z_1 t_1 z_1 t_1 = \theta(z_2)t_2 \theta(z_2) t_2 z_2$.
        
        Let $t_1z_1 t_1 z_1 t_1 = z_2$. Then $H(z_2, \theta(z_2)) = H(t_1z_1 t_1 z_1 t_1, t_1 \theta(z_1) t_1 \theta(z_1) t_1)=2$, which is a contradiction as $H(z_2, \theta(z_2))=1$.
        Let $t_1z_1 t_1 z_1 t_1 = \theta(z_2)t_2 \theta(z_2) t_2 z_2$. Then $H(\theta(z_2)t_2 \theta(z_2) t_2 z_2, \theta(z_2)t_2 z_2 t_2 z_2)=1$ which is a contradiction as\\
        $H(t_1z_1 t_1 z_1 t_1, t_1 \theta(z_1) t_1 \theta(z_1) t_1)=2$.
        Similarly, we show that $t_1z_1 t_1 z_1 t_1 = z_2 t_2 z_2t_2 z_2$ and $t_1z_1 t_1 z_1 t_1 = \theta(z_2)t_2 z_2t_2 z_2$ are not possible.

        
        If $t_1z_1 t_1 z_1 t_1 = z_2t_2 z_2$, then we have the following cases:
        \begin{itemize}
            \item If $|z_2| \leq |z_1t_1|$, then $z_1t_1=\alpha' z_2 $ and $t_1z_1 = z_2 \alpha$ for some $\alpha, \alpha' \in \Sigma^*$. Substituting these values in $t_1z_1 t_1 z_1 t_1 = z_2t_2 z_2$, we get $\alpha' =\theta(\alpha)$. Now, $z_1t_1 \alpha = \alpha' z_2 \alpha = \theta(\alpha) z_2 \alpha $ and $\theta(\alpha) t_1 z_1 =\theta(\alpha) z_2 \alpha$. This implies that $z_1t_1 \alpha = \theta(\alpha) t_1 z_1$.
            Using Proposition \ref{ch4defctw32+2}, we have $z_1 \in \textup{P}_\theta$, a contradiction.
            
            \item  If $|z_2| > |z_1t_1|$, then $t_1z_1 t_1 z_1 t_1$ has two periods, $|t_1z_1|$ and $|z_2t_2|$, and $|t_1z_1 t_1 z_1 t_1| \geq |t_1z_1|+|t_2z_2| - gcd(|t_1z_1|, |t_2z_2|)$. Then
            by Fine and Wilf theorem \ref{FineandWilf}, $z_2t_2 \notin Q$, contradiction.
        \end{itemize}
        If $t_1z_1 t_1 z_1 t_1 = \theta(z_2)t_2 z_2$, then we also have two cases:
        \begin{itemize}
            \item If  $|z_2| \leq |z_1t_1|$ then, $z_1t_1 =\alpha' z_2$ and $t_1z_1 = \theta(z_2)\alpha$ for some $\alpha, \alpha' \in \Sigma^*$. Substituting these values in $t_1z_1 t_1 z_1 t_1 = \theta(z_2)t_2 z_2$, we get $\alpha' = \theta(\alpha)$. 
            Then, $z_1t_1 =\theta(\alpha) z_2$ and $t_1z_1 = \theta(z_2)\alpha = \theta(\theta(\alpha)z_2) = \theta(z_1t_1)= t_1 \theta(z_1)$. This implies $z_1 \in \textup{P}_\theta$, contradiction.
            
            \item If $|z_2| > |z_1t_1|$ then, $z_2 = \alpha' z_1 t_1 $ and $\theta(z_2) = t_1 z_1 \alpha$ for some $\alpha, \alpha' \in \Sigma^+$, i.e., 
            $\theta(\alpha' z_1 t_1) = t_1 z_1 \alpha$. This implies 
            $z_1 \in \textup{P}_\theta$, a contradiction.
        \end{itemize}

        \item If $|y_1|= |y_2|$, then $t_1z_1 t_1 z_1 t_1 = \theta(z_2)t_2 z_2t_2 z_2$ or $t_1z_1 t_1 z_1 t_1 = z_2 t_2 z_2t_2 z_2$ or $t_1z_1 t_1 z_1 t_1 = \theta(z_2)t_2 \theta(z_2) t_2 z_2$, which are not possible as described above.
           \end{itemize}

         \item \textbf{Case 3.2}  Let $t_1z_1 t_1 z_1 t_1 = (z_2t_2)^{i'} z_2$ for some $i' \geq 0$. Then similar to Case 3.1, we have a contradiction.
         

           \item \textbf{Case 3.3}  Let $t_1z_1 t_1 z_1 t_1 = (t_2 \theta(z_2))^{i'} (t_2z_2)^{j'} t_2$ for some $i', j' \geq 0$. 
           \begin{itemize}
               \item 
           If $|y_1|< |y_2|$, then $t_1z_1 t_1 z_1 t_1 = t_2$ or $t_1z_1 t_1 z_1 t_1 = t_2z_2 t_2$ or 
          $t_1z_1 t_1 z_1 t_1 = t_2 \theta(z_2) t_2$ or $t_1z_1 t_1 z_1 t_1 = t_2 \theta(z_2) t_2z_2 t_2$
          or $t_1z_1 t_1 z_1 t_1 = t_2 \theta(z_2) t_2 \theta(z_2) t_2$ or $t_1z_1 t_1 z_1 t_1 = t_2 z_2 t_2z_2 t_2$.
          
          If $t_1z_1 t_1 z_1 t_1 = t_2$, then $z_1 \in \textup{P}_\theta$, a contradiction.

          Let  $t_1z_1 t_1 z_1 t_1 = t_2z_2 t_2$. Then $H(t_1z_1 t_1 z_1 t_1, t_1 \theta(z_1) t_1 \theta(z_1) t_1)=2$ which is a contradiction as $H(t_2z_2 t_2, t_2 \theta(z_2) t_2)=1$. Similarly $t_1z_1 t_1 z_1 t_1 = t_2 \theta(z_2) t_2$ is not possible. 
          
          
          If $t_1z_1 t_1 z_1 t_1 = t_2 \theta(z_2) t_2 \theta(z_2) t_2$ or $t_1z_1 t_1 z_1 t_1 = t_2 z_2 t_2z_2 t_2$, then by Theorem \ref{FineandWilf}, $t_2 \theta(z_2) \notin Q$ or $t_2 z_2 \notin Q$, which is a contradiction.
          
          If $t_1z_1 t_1 z_1 t_1 = t_2 \theta(z_2) t_2z_2 t_2$, then as $|t_1 z_1|< |t_2 z_2|$, we have $t_2 \theta(z_2) = t_1 z_1 \alpha$ and $z_2 t_2 = \alpha' z_1 t_1$ for some $\alpha, \alpha' \in \Sigma^+$. Then, $t_1z_1 t_1 z_1 t_1 = t_2 \theta(z_2) t_2z_2 t_2$  gives $\alpha' =\theta(\alpha)$. Now, $t_1z_1\alpha=t_2 \theta(z_2) =\theta(z_2 t_2) = t_1 \theta(z_1) \alpha$. This gives $z_1 \in \textup{P}_\theta$, a contradiction.
          
           \item 
           If $|y_1|= |y_2|$, then $t_1z_1 t_1 z_1 t_1 = t_2 \theta(z_2) t_2z_2 t_2$
          or $t_1z_1 t_1 z_1 t_1 = t_2 \theta(z_2) t_2 \theta(z_2) t_2$ or $t_1z_1 t_1 z_1 t_1 = t_2 z_2 t_2z_2 t_2$.
          If $t_1z_1 t_1 z_1 t_1 = t_2 \theta(z_2) t_2z_2 t_2$, then $z_2 \in \textup{P}_\theta$, a contradiction.
          If $t_1z_1 t_1 z_1 t_1 = t_2 z_2 t_2z_2 t_2$, then $y_1 = y_2$.
          If $t_1z_1 t_1 z_1 t_1 = t_2 \theta(z_2) t_2 \theta(z_2) t_2$, then $t_1=t_2$ and $z_1 = \theta(z_2)$. So, $y_1 = z_1 t_1$ and $y_2 = \theta(z_1) t_1$. Then it can be easily verified that $t_1 z_1 t_1 \theta(z_1) t_1$ nearly $\theta$-commutes with $y_2$, but not with $y_1$, hence $t_1 z_1 t_1 \theta(z_1) t_1 \in L_\theta(y_2) \setminus L_\theta(y_1)$, a contradiction.
           \end{itemize}
          \end{itemize}
          
    \end{itemize}
    The converse is straightforward.
\end{proof}

\section{Conclusions}
This manuscript primarily focused on properties of words $x$ and $y$ such that $H(xy, \theta(y)x)=1$. A characterization of $x$ and $y$ such that $H(xy, \theta(y)x)=1$ is given. The discussions on words $x$ and $y$ such that $x$ nearly $\theta$-commutes with $y$ in this manuscript have led to some interesting questions (as future work):

\textbf{Question 1:} What are the necessary and sufficient conditions on $x$ and $y$ such that the relation $R_\theta$ is transitive?

\textbf{Question 2:} For a given word $w$, how many distinct ways we can factorize $w$ in the form $tz$ and $zt$ where $t \in \textup{P}_\theta$ and $H(z, \theta(z))=1$?


\textbf{Question 3:} How many length $n$ words $w$ are possible such that $w$ has at least one factorization $xy$ where $x$ nearly $\theta$-commutes with $y$?

\bibliographystyle{abbrv}
\bibliography{splncs04.bib}

\end{document}